\newcommand{\R}{\mathbb{R}}
\newcommand{\comment}[1]{}
\begin{document}

\title{Spreading speeds and traveling waves for monotone systems of impulsive reaction-diffusion equations: application to tree-grass interactions in fire-prone savannas 
\thanks{The research was supported by the DST/NRF SARChI Chair  in Mathematical Models and Methods in Biosciences and Bioengineering at the University of Pretoria (grant 82770) and National Science Centre, Poland, grant 2017/25/B/ST1/00051.
}
}
%

\subtitle{The paper is dedicated to the memory of Professor H. I. Freedman}

\titlerunning{Impulsive monotone reaction-diffusion systems: traveling waves and spreading speeds}        

\author{J. Banasiak \and Y. Dumont  \and
        I.V. Yatat Djeumen\footnote{Corresponding author: ivric.yatatdjeumen@up.ac.za}
}


\institute{J. Banasiak, Y. Dumont and I.V. Yatat Djeumen: \at
              University of Pretoria, Department of Mathematics and Applied Mathematics, Pretoria, South Africa.
           \and
          Y. Dumont: \at AMAP, University of Montpellier, CIRAD, CNRS, INRA, IRD, Montpellier, France.\\
              CIRAD, Umr AMAP, Pretoria, South Africa.
            \and J. Banasiak: \at Institute of Mathematics, \L\'{o}d\'{z} University of Technology, \L\'{o}d\'{z}, Poland.  }

\date{Received: date / Accepted: date}

\maketitle

\begin{abstract}
Many systems in life sciences have been modeled by
reaction-diffusion equations. However, under some circumstances,
these biological systems may experience instantaneous and periodic
perturbations (e.g. harvest, birth, release, fire events, etc) such that an
appropriate formalism is necessary, using, for instance, impulsive
reaction-diffusion equations. While several works tackled the issue of traveling waves for monotone reaction-diffusion equations and the computation of spreading speeds, very little has been done in
the case of monotone impulsive reaction-diffusion equations. Based
on vector-valued recursion equations theory, we aim to present in
this paper results that address two main issues of monotone
impulsive reaction-diffusion equations.  First, they  deal with the
existence of traveling waves for monotone systems of impulsive reaction-diffusion equations. Second, they allow the computation of spreading speeds for monotone systems of impulsive reaction-diffusion equations. We apply our methodology to a planar system
of impulsive reaction-diffusion equations that models tree-grass
interactions in fire-prone savannas. Numerical simulations, including
numerical approximations of spreading speeds, are finally provided
in order to illustrate our theoretical results and support the
discussion.

\keywords{Impulsive event \and Partial differential equation  \and Recursion equation \and Monotone cooperative system \and Spreading speed \and Traveling wave \and Savanna \and Pulse fire.}

\end{abstract}

\section{Introduction}
In nature, all organisms migrate or disperse to some extent. This
can take diverse forms such as walking, swimming, flying, or
being transported by wind or flowing water, see  Shigesada and Kawasaki \cite{Shigesada1997}. Such a migration, or dispersion, can be
 to some extent related to human activities that bring drastic
changes in the global environment. According to \cite{Shigesada1997}, dispersive movements become
noticeable when an offspring or a seed leaves its natal site,
or when an organism's habitat deteriorates from overcrowding. The spatially explicit ecological theories of such events have  been made possible thanks  to successful development of mathematical models that have played a central role in the description of migrations, see Friedman
\cite{Friedman1964}, Shigesada and Kawasaki    \cite{Shigesada1997}, Okubo and Levin \cite{Okubo2001},
Cantrell and Cosner  \cite{Cantrell2003}, Volpert
\cite{Volpert2014}, Logan   \cite{Logan2008},
\cite{Logan2015}, Perthame   \cite{Perthame2015}, and the references therein.

Mathematical literature dealing with the species' spread mostly relies
on reaction-diffusion equations that assume
that the dispersal is governed by random diffusion and that it, along with the growth processes, take place continuously in time and space,  (Cantrell and
Cosner   \cite{Cantrell2003}, Lewis and Li
\cite{Lewis2012}). This approach  has had a remarkable success in explaining
the rates at which species have invaded large open environments, see Shigesada and Kawasaki   \cite{Shigesada1997}, Okubo and Levin
  \cite{Okubo2001}, Cantrell and Cosner   \cite{Cantrell2003}, Lewis
and Li   \cite{Lewis2012}, Volpert   \cite{Volpert2014},
Logan   \cite{Logan2008},  \cite{Logan2015}, Perthame
  \cite{Perthame2015}. However, it is well-known that
ecological species may experience several phenomena that, depending on circumstances, can be
either time-continuous (growth, death, birth, release, etc.), or
time-discrete (harvest, birth, death, release, etc.),  see also Ma
and Li   \cite{Ma2009}, Dumont and Tchuenche
\cite{Dumont2012}, Yatat et al.   \cite{Yatat2017}, Yatat   \cite{PhDYatatDjeumen2018} and the references therein. In the case of time-discrete perturbations, whose duration is negligible in
 comparison with the duration of the process, it is natural to
assume that these perturbations act instantaneously; that is, in the form of impulses (Lakshmikantham et al.
\cite{Lakshmikantham1989}, Bainov and Simeonov
\cite{Bainov1995}). Hence, there is a need to create a meaningful mathematical framework to analyse models leading to, say, impulsive reaction-diffusion equations.

Before going further, let us make some comments about how
$\tau$-periodic impulsive phenomena are taken into account in
mathematical models. Here, for simplicity of the exposition, we will focus  on
mathematical models depending only on time. We note that  there exist several
possibilities  to model pulse events such as the formalism presented in (Lakshmikantham
et al.  \cite{Lakshmikantham1989}, Bainov and Simeonov
\cite{Bainov1995}, Dumont and Tchuenche   \cite{Dumont2012},
Dufourd and Dumont  \cite{Dufourd2013}, Tchuint\'e Tamen et
al. \cite{Tchuinte2016},   \cite{Tchuinte2017}, Yatat
  \cite{PhDYatatDjeumen2018}, Yatat et al.
\cite{Yatat2017},   \cite{Yatat2018} and the references therein), or the approach used  by Lewis and Li
\cite{Lewis2012}, see also Weinberger
et al.   \cite{Weinberger2002}, Lewis et al.
\cite{Lewis2002}, Li et al.   \cite{Li2005}, Vasilyeva et al.   \cite{Vasilyeva2016}, Fazly et al.   \cite{Fazly2017}, Huang et al.  \cite{Huang2017}, Yatat and Dumont \cite{YatatDumont2018} and the references therein.

Following  \cite{Lakshmikantham1989}, we describe the evolution process by
\begin{equation}\label{lakshmikantham}
\begin{array}{l}
\displaystyle\frac{dx}{dt}=f(t,x),\quad t\neq k\tau, \\
\end{array}
\end{equation}
\begin{equation}\label{Imp-lakshmikantham}
\begin{array}{l}
x(k\tau^+)-x(k\tau^-)=g(k\tau,x(k\tau^-)),\quad k=1,2,3,...,
\end{array}
\end{equation}
where $f,g:\R_+\times\Omega\rightarrow\R^n$, $\Omega\subseteq\R^n$
is an open set and
$$x(k\tau^\pm)=\lim\limits_{\theta\rightarrow0}x(k\tau\pm\theta).$$
Usually, $x$ is assumed to be left-continuous; that is,
$x(k\tau^-)=x(k\tau)$. Let $x(t)=x(t,t_0,x_0)$ be any solution of
(\ref{lakshmikantham}) starting at $(t_0,x_0)$. The evolution
process behaves as follows: the point $p_t=(t,x(t))$ begins its
motion from the initial point $p_{t_0}=(t_0, x_0)$ and moves along
the curve $\{(t,x):t\geq t_0, x=x(t)\}$ until the time
$t_1=\tau>t_0$ at which the point $p_{t_1}=(t_1, x(t_1))$ is
transferred to $p_{t_1^+}=(t_1, x_1^+)$ where
$x_1^+=x(t_1)+g(t_1,x(t_1))$. Then the point $p_t$ continues to move
further along the curve with $x(t)=x(t,t_1,x_1^+)$ as the solution
of (\ref{lakshmikantham}) starting at $p_{t_1}=(t_1,x_1^+)$ until
the next moment $t_2=2\tau>t_1$. Then, once again the point
$p_{t_2}=(t_2, x(t_2))$ is transferred to $p_{t_2^+}=(t_2, x_2^+),$
where $x_2^+=x(t_2)+g(t_2,x(t_2))$. As before, the point $p_t$
continues to move forward with $x(t)=x(t,t_2,x_2^+)$ as the solution
of (\ref{lakshmikantham}) starting at $(t_2,x_2^+)$. Thus, the
evolution process continues forward as long as the solution of
(\ref{lakshmikantham}) exits.

To describe the other approach, we focus on $\tau$-periodic impulsive perturbations. The inter-perturbation
season (i.e. the time between two successive perturbations) has
length $\tau$ (units of time) and at the end of the
inter-perturbation season, a prescribed perturbation occurs. Hence, we can
consider the whole time interval as a succession of
inter-perturbation seasons of length $\tau$. We denote the state
variable at time $t\in [0, \tau]$ during the inter-perturbation
season $k\in\mathbb{N}^*=\{1,2,3,...\}$ by $x_k(t)\in\mathbb{R}^n;$ its
inter-perturbation dynamics is described by
\begin{equation}\label{lewis-formalism}
\begin{array}{l}
\displaystyle\frac{dx_{k}}{dt}=f(t,x_{k}),\quad 0\leq t\leq\tau, \\
\end{array}
\end{equation}
where $f:\R_+\times\Omega\rightarrow\R^n$ is a $\tau$-periodic function and $\Omega\subseteq\R^n$
is an open set. By the end of each season the perturbation is given by an updating condition
\begin{equation}\label{update-lewis-formalism}
\begin{array}{l}
x_{k+1}(0)=G_k[x_k(\tau)],
\end{array}
\end{equation}
where $G_k: \Omega\to \Omega, k=1,2,\ldots,$ are (possibly) nonlinear operators,  with given $x_1(0)\in\Omega.$ Then system
(\ref{lewis-formalism})-(\ref{update-lewis-formalism}) can be written as
\begin{equation}\label{recursion-lewis-formalism}
\begin{array}{l}
x_{k+1}(0)=G_kQ[x_k(0)],\quad k=1,2,3,...
\end{array}
\end{equation}
where $Q$ is the so called time-$\tau$-map operator solution of system
(\ref{lewis-formalism}). We note that there are
analytic links between these two formalisms, see \cite{YatatDumont2018}.

There are several works that considered the impact of impulsive perturbations on the dynamics of a system, both in the space-implicit and space-explicit case (see also Section \ref{section-example}). Very often, impulsive perturbations in space-implicit mathematical models result in the occurrence of periodic solutions in the model (e.g. Ma and Li \cite{Ma2009}, Yatat et al. \cite{Yatat2017} and references therein).  For space-explicit impulsive mathematical models in bounded domains, in addition to periodic solutions that may occur, the issue of minimal domain has been also addressed (e.g. Lewis and Li \cite{Lewis2012}, Yatat and Dumont \cite{YatatDumont2018}). On the other hand, in unbounded domains the problems of the existence of travelling wave solutions and the  computation of the
spreading speeds are hardly addressed (see below).

The study of traveling waves, as well as the computation of
spreading speeds for  monotone systems of reaction-diffusion
equations, have been done by several authors (e.g. Weinberger et al.
 \cite{Weinberger2002}, Lewis et al.   \cite{Lewis2002},
Li et al.   \cite{Li2005}, Volpert   \cite{Volpert2014},
Yatat et al.   \cite{Yatat2017b} and references therein).
However, little is known about the case of impulsive
reaction-diffusion equations. For a scalar impulsive
reaction-diffusion equation, Yatat and Dumont
\cite{YatatDumont2018} considered the Fisher-Kolmogorov-Petrowsky-Piscounov (FKPP) equation and obtained conditions under which an
invasive traveling wave, connecting the extinction equilibrium and
the positive equilibrium may exist (see also Lewis and Li
\cite{Lewis2012}). To the best of our knowledge, the existence of
traveling waves for system of impulsive
reaction-diffusion equation was studied only in Huang et al.
\cite{Huang2017}, and only in a particular case. Precisely, the authors
considered a stage-structured population model, were only one
stage (or state variable) experiences a spatial diffusion, while the
others are stationary. This assumption leads to a partially
degenerate system of impulsive reaction-diffusion equations. Moreover, they
also assumed reaction term for the diffusing state variable  as linear.

The aim of this paper is to give some insights into the existence of traveling wave solutions for monotone systems of
impulsive reaction-diffusion equations as well as the
computation of their spreading speeds. Precisely, we use
the vector-valued recursion theory proposed by Weinberger
et al.   \cite{Weinberger2002} (see also Lewis et al.
\cite{Lewis2002}, Li et al.   \cite{Li2005}, Lewis and Li   \cite{Lewis2012} and the references
therein) to develop a framework that is able to deal
with the existence of
traveling wave solutions for monotone systems of impulsive
reaction-diffusion equations and
the computation of spreading speeds. The  paper is organized as
follows: Section \ref{section-example} deals with a brief review of both space-implicit and space-explicit mathematical models that take into account pulse events.
Section \ref{generic-framework} deals with the presentation of
the framework for the computation of the
spreading speeds and the existence of traveling wave solutions for
monotone systems of impulsive reaction-diffusion equations. Section
\ref{application} deals with the application of this framework to a system of two impulsive reaction-diffusion equations that
models tree-grass interactions in fire-prone savannas.  We also provide some numerical illustrations of our theoretical results and, in particular, we show an  approximation of the spreading speeds.

\section{A brief review of mathematical models describing pulse events
}\label{section-example}
We now provide a brief literature review of both space-implicit and space-explicit impulsive mathematical models.

\subsection{Space-implicit impulsive models}
Several types of perturbations, or instantaneous phenomena, have been considered as pulse events in mathematical models. These include events such as  birth, vaccination, release, harvest, or fire events. The resulting impulsive models were rigorously analyzed by their authors by the well-known theory due to Lakshmikantham et al. \cite{Lakshmikantham1989}, Bainov and Simeonov  \cite{Bainov1995} and \cite{Bainov1989}, or Lakmeche and Arino \cite{Lakmeche2000}. We note that, by using a suitable comparison argument, the standard theory of ordinary differential equations (Hale \cite{Hale1988}, \cite{Hale1980}) can also be used.

\subsubsection{Modelling births as pulse events}

Several authors (Ma and Li \cite{Ma2009},    Wenjun and Jin \cite{Wenjun2007}, Zhang et al. \cite{Zhang2008}) analysed the dynamics of infectious diseases in a population, where births occur periodically as a single pulse and also compared the effects of constant and pulse birth process. Namely, they found that if the birth pulse period is greater than some threshold that depends on the parameters related to the dynamics of the infection, then it is easier for such a population to eliminate the disease than if the birth process is constant. On the other hand, if the birth pulse period is lower than the threshold, then the population with a constant birth process eliminates the disease faster. In other words, when the birth pulse period gets very large, the births become less important and have little effect on the population. Hence, the disease in the population with a pulse birth can be eliminated more easily. However, as the birth pulse period gets very small, the population gives births many times in a very short time period, which have stronger effect on the population than in the case with constant births. Then it becomes more difficult to eradicate the disease in the populations with pulse births (Ma and Li \cite{Ma2009}).

\subsubsection{Modelling vaccinations as pulse events}
Vaccination strategies are designed and applied to anticipate, control, or eradicate infectious diseases. Vaccination strategies include continuous-time vaccination and pulse vaccination. Pulse vaccination strategy (PVS) consists of periodic repetition of impulsive vaccinations in a population, for all the age cohorts. At each vaccination time, a constant fraction of susceptibles is vaccinated.
This kind of vaccination is called impulsive since all the vaccine doses are applied in a time period which is very short with respect to the time scale of the target disease (Ma and Li \cite{Ma2009}). The theoretic analysis of PVS was done by several authors and it was found that this strategy can keep the density of susceptible individuals always below  some threshold above which the epidemics will be recurrent (Agur et al. \cite{Agur1993}). In addition, they showed that PVS may allow for the eradication of the disease with a lower fraction of vaccinated susceptibles, than if the continuous-time vaccination strategy was applied (Agur et al. \cite{Agur1993}, Shulgin et al. \cite{Shulgin1998}, D'Onofrio \cite{Donofrio2002}, Zheng et al. \cite{Zheng2003}, Ma and Li \cite{Ma2009} and references therein).

\subsubsection{Modelling releases as pulse events}
In the framework of biological control of pests or vectors of infectious diseases, the sterile insect technique (SIT) is one of the promising ones. SIT control generally consists in massive releases of sterile insects in the targeted area in order to eliminate, or at least to lower the pest population under a certain threshold (Anguelov et al. \cite{Anguelov2019TIS}). Generally, SIT releases are done periodically. That is why several authors modelled the release process  as a periodic impulsive event, while keeping the continuous-time differential equation framework for the birth, growth, death, or mating process (e.g. White et al. \cite{White2010}, Dumont and Tchuenche \cite{Dumont2012}, Strugarek et al. \cite{Strugarek2019}, Bliman et al. \cite{Bliman2019} and the references therein). Based on the qualitative analysis of their impulsive models, the authors were able to derive meaningful relations between the period and the size of the releases in order to achieve the elimination of the vector or pest population in the long term.

In the context of interacting species such as prey-predator interactions, there exist mathematical models that tackled periodic releases of one of the interacting species (e.g. prey only, predator only). They considered periodic impulses as to model periodic release events. The authors found relations involving the pulse time period and the amount of released species that precluded extinction, in the long term dynamics, of interacting populations (see for instance Zhang et al. \cite{Zhang2005}, Zhao et al. \cite{Zhao2011}).

\subsubsection{Modelling harvests as pulse events}
Other works addressed the question of periodic pulse harvests of interacting populations or, in some cases, of a single population. The authors  aimed to characterize the impact of pulse harvests on the dynamics of the species and they found that, depending on the pulse period and the rate of the harvest, one could avoid the extinction of the population (e.g. Liu et al. \cite{Liu2009}, Zhao et al. \cite{Zhao2011}, Yatat and Dumont \cite{YatatDumont2018}).

\subsubsection{Modelling fires as pulse events in tree-grass interactions in fire-prone savannas}

Maintaining the balance between the grass and the trees in savanna is of utmost importance for both human and animal populations living in such areas. The problem is that in typical circumstances the trees encroach on the grassland making the environment inhabitable for many species. It turns out that periodic fires, either natural or manmade, are one of the way to maintain an acceptable equilibrium.  Thus, several mathematical models have been developed to study tree-grass interactions in fire-prone savannas (see the review of Yatat et al. \cite{Yatat2018}). Some of these models take into account fire as a time-continuous forcing in tree-grass interactions. However, and as pointed out in Yatat et al. \cite{Yatat2018},  it is questionable whether it makes sense to model fire as a
permanent forcing that continuously removes a fraction of the fire sensitive biomass.
Indeed, since several months and even years can pass between two successive fires, they can
be rather considered as instantaneous perturbations of
the savanna ecosystem (see also Yatat \cite{PhDYatatDjeumen2018}, Yatat et al. \cite{Yatat2017}, Tchuint\'e et al. \cite{Tchuinte2017}, \cite{Tchuinte2016}).
Several recent papers have
proposed to model fires either as stochastic events, while keeping the continuous-time differential equation
framework (Baudena et al. \cite{Baudena2010}, Beckkage et al. \cite{Beckage2011}, Synodinos et al. \cite{Synodinos2018}), or by using
a time-discrete model (Higgins et al. \cite{Higgins2008}, Accatino et al. \cite{Accatino2013},  \cite{Accatino2016}, Klimasara and Tyran-Kami\'{n}ska \cite{KT}). However, a drawback of many of
the aforementioned recent time-discrete stochastic
models (Higgins et al. \cite{Higgins2008}, Baudena et al. \cite{Baudena2010}, Beckage et al. \cite{Beckage2011}) is that they
hardly lend themselves to analytical treatment. Thus, on the basis of recent publications (see for instance Yatat et al. \cite{Yatat2018} and the references therein), we consider fires as impulsive time-periodic events. While certainly an approximation, such an approach results in impulsive differential equation models which are a good compromise combining  the impact of time-discrete fires with a time-continuous process of the vegetation growth. Thus they are analytically tractable,  while  at the same time remain reasonably realistic.

Now we recall the minimalistic tree-grass interactions model with pulse fires that will be used later in the paper (see Section \ref{application}). We assume that the trees and grass form an amensalistic system in which grass is harmed by the trees (which, for instance,  block the sunlight) but itself does not affect them.  The fires occur periodically every $\tilde{\tau}$ units of time. We denote by $T_n$ (resp. $G_n$) the tree (resp. grass) biomass during the inter-fire season number $ n\in\mathbb{N}^*$. Following the formalism of the recursion equations  (see Weinberger et al. \cite{Weinberger2002}, Yatat and Dumont \cite{YatatDumont2018}), the resulting minimalistic system of equations governing the trees-grass interactions with periodic fires is given by
\begin{equation}\label{IDE-recursion}
\left\{
\begin{array}{l}
\left.
\begin{array}{l}
\displaystyle\frac{d T_n}{d t}=\gamma_{T}(\textbf{W})\left(1-\displaystyle\frac{T_n}{K_{T}(\textbf{W})}\right)T_n-\delta_{T}T_n,\\
\\
\displaystyle \frac{d G_n}{d t}=\gamma_{G}(\textbf{W})\left(1-\displaystyle\frac{G_n}{K_{G}(\textbf{W})}\right)G_n-\delta_{G}G_n-\eta_{TG}T_nG_n,\\
\end{array}
\right. 0\leq t\leq \tilde{\tau},\\
\\
\left.
\begin{array}{l}
T_{n+1}(0)=(1-\psi(T_n(\tau))w_G(G_n(\tau)))T_n(\tau),\\
\\
G_{n+1}(0)=(1-\eta)G_n(\tau),\\
\end{array}
\right.
\end{array}
\right.
\end{equation}
  with non negative initial conditions $(T_1(0), G_1(0))$.

Here, between two successive fires; that is, in the inter-fire season $n$, the dynamics of both the tree and grass biomasses is modelled by the first two equations of system (\ref{IDE-recursion}), where $\gamma_G(\textbf{W})$ and
$\gamma_T(\textbf{W})$ denote the unrestricted rates of growth of the grass and the tree biomass, respectively, while
 $K_{G}(\textbf{W})$ and $K_{T}(\textbf{W})$ are the carrying capacities for grass and the trees, respectively.  All these functions are assumed to be increasing and bounded functions of the water availability $\textbf{W}$ which is supposed to be known. Further, $\delta_{G}$ and $\delta_{T}$ denote, respectively, the rates of the grass and the tree biomass loss due to natural causes,
herbivores (grazing and/or browsing) or human actions, while $\eta_{TG}$ denotes rate of the loss of the grass biomass due to the existence of trees per units of the biomasses.

At the end of the inter-fire season a fire occurs and impacts both the tree and grass biomasses. Thus there is an update of the biomasses for the beginning of the next inter-fire season. This event is modelled by the last two equations of system (\ref{IDE-recursion}). Here we assume that the fire intensity, denoted by $w_G$, is an
increasing and bounded function of the grass biomass with one as the upper bound. We also assume that $w_G(0)=0$ and $w_G'(0)=0.$
Impulsive fire-induced tree/shrub mortality, denoted by $\psi$, is assumed
to be a positive, decreasing, and nonlinear function of the tree biomass. Its upper bound is
 taken as one. Further,   $\eta$ is
the specific loss of the grass biomass due to the fire.  To avoid the extinction of either $T_n,$ or $G_n$, we assume that (see also Yatat
et al.  \cite{Yatat2018})
\begin{equation}\label{tecknical-Assumption}
\gamma_{G}(\textbf{W})-\delta_{G}>0\quad \mbox{and}\quad
\gamma_{T}(\textbf{W})-\delta_{T}>0.
\end{equation}
System (\ref{IDE-recursion}) will be studied in Section \ref{application}.

Readers are referred to Yatat
et al.  \cite{Yatat2018} for the derivation and analysis of system (\ref{IDE-recursion}) following the formalism of Lakshmikantham et al.  \cite{Lakshmikantham1989}.

\subsection{Space-explicit impulsive models}
The formulation of space-explicit impulsive models generally consists in the addition of local or non-local spatial operators to a temporal impulsive model. In Akhmet et al. \cite{Akhmet2006}, Li et al. \cite{Li2013} and Liu et al. \cite{Liu2011}, the authors considered impulsive reaction-diffusion equations to model spatio-temporal dynamics of ecological species with  prey-predator interactions and experiencing pulse and periodic perturbations like harvest, release, etc. The spatial movement of species was modelled by the Laplace operator with a constant diffusion rate. Qualitative analysis of these models was done by using the theory of sectorial operators (Henry \cite{Henry1981}, Rogovchenko \cite{Rogovchenko1997b}, \cite{Rogovchenko1997a}, Li et al. \cite{Li2013}) and comparison arguments (Rogovchenko \cite{Rogovchenko1996}, Walter \cite{Walter1997}, Liu et al. \cite{Liu2011}, Akhmet et al. \cite{Akhmet2006}). More precisely, the authors obtained some conditions involving the pulse time period that ensured the permanence of the predator-prey system and the existence of a unique globally stable periodic solution (Akhmet et al. \cite{Akhmet2006}, Li et al. \cite{Li2013}, Liu et al. \cite{Liu2011}). Vasilyeva et al. \cite{Vasilyeva2016} dealt with the question of persistence versus extinction in a single population model featuring a non-local impulsive reaction-advection-diffusion model for an insect population. The non-local term was used to  describe the dispersal of the adult insects by flight. The authors employed a dispersal kernel that gave the probability density function of the signed dispersal distances.

We note that the study of the species spread and their wave speeds, when they experience impulsive and periodic perturbations, in the case of scalar equations was done in  Vasilyeva et al. \cite{Vasilyeva2016}, Lewis and Li \cite{Lewis2012}, or Yatat and Dumont \cite{YatatDumont2018}.  However, systems of impulsive reaction-diffusion have not received much attention, (Huang et al. \cite{Huang2017}). We aim to address this question here by extending the minimalistic trees-grass interactions model (\ref{IDE-recursion}). To this end, we assume that both the woody and herbaceous plants can propagate in space through diffusion; see Yatat et al.  \cite{Yatat2017b} for a
discussion of the construction of the trees-grass interactions partial differential
equations models. The resulting minimalistic system of impulsive reaction-diffusion equations is then given by
\begin{equation}\label{I-savnna}
\left\{
\begin{array}{l}
\left.
\begin{array}{l}
\displaystyle\frac{\partial T_n}{\partial t}=d_T(\textbf{W})\displaystyle \frac{\partial^2 T_n}{\partial x^2}+\gamma_{T}(\textbf{W})\left(1-\displaystyle\frac{T_n}{K_{T}(\textbf{W})}\right)T_n-\delta_{T}T_n,\\
\\
\displaystyle \frac{\partial G_n}{\partial t}=d_G(\textbf{W})\displaystyle \frac{\partial^2 G_n}{\partial x^2}+\gamma_{G}(\textbf{W})\left(1-\displaystyle\frac{G_n}{K_{G}(\textbf{W})}\right)G_n-\delta_{G}G_n-\eta_{TG}T_nG_n,\\
\end{array}
\right.  0\leq t\leq \tilde{\tau},\quad x\in\R,\\
\\
\left.
\begin{array}{l}
T_{n+1}(x,0)=(1-\psi(T_n(x,\tau))w_G(G_n(x,\tau)))T_n(x,\tau),\\
\\
G_{n+1}(x,0)=(1-\eta)G_n(x,\tau),\\
\end{array}
\right.
\end{array}
\right.
\end{equation}
with given non negative initial conditions
\begin{equation}
(T_1(x,0), G_1(x,0)). \label{pulsed_swv_eq2}
\end{equation}
In system (\ref{I-savnna}), $d_T(\textbf{W})$ and $d_G(\textbf{W})$ denote the woody, respectively, herbaceous biomass spatial vegetative diffusion coefficient, while the remaining coefficients and assumptions on them are as in (\ref{IDE-recursion}).

The aims of the present study include proving the existence of monostable traveling wave solutions to (\ref{I-savnna}) and also the computation of the spreading speeds for them. To achieve these objectives, we use the results on monotone and monostable recursion equations and then transfer them to monotone and monostable systems of impulsive reaction-diffusion equations, as in \cite{Li2005}. We stress here that the case of bistable recursion equations is still an open problem.

\section{System of impulsive reaction-diffusion equations in unbounded domains: spreading speeds and traveling waves}\label{generic-framework}
In this section we introduce the basic notation, definition and results that allow us to deal with the issues of the
existence of traveling wave solutions and/or computation of the
spreading speeds for impulsive reaction-diffusion (IRD) systems.
 Let us denote:\\
$\textbf{u}=(u_1, u_2,...,u_N)$, $\textbf{F}=(F_1, F_2,...,F_N)$,
$\textbf{H}=(H_1, H_2,...,H_N)$, $D=diag(d_1, d_2,...,d_N)$ with
$d_i>0$, for $i=1,2,...,N$ and $\displaystyle\frac{\partial^2
\textbf{u}}{\partial x^2}:=\left(\displaystyle\frac{\partial^2
u_1}{\partial x^2},\displaystyle\frac{\partial^2 u_2}{\partial
x^2},...,\displaystyle\frac{\partial^2 u_N}{\partial x^2}\right)$.
In the case when there are no impulsive perturbations, the
reaction-diffusion system is written as
\begin{equation}\label{RD-general}
 \begin{array}{lcl}
 \displaystyle\frac{\partial \textbf{u}(x,t)}{\partial t} &=& D\displaystyle\frac{\partial^2 \textbf{u}(x,t)}{\partial x^2}+\textbf{F}(\textbf{u}(x,t)), \quad t> 0,\quad x\in\mathbb{R},\\
 \end{array}
\end{equation}
together with sufficiently smooth and nonnegative initial condition

\begin{equation}\label{RD-CI-general}
\begin{array}{lcl}
 \textbf{u}(x,0) &=& \textbf{u}_0(x), \quad x\in\mathbb{R}.\\
 \end{array}
\end{equation}

For $\tau$-periodic impulsive perturbations, we consider the whole time interval as a succession of inter-perturbation seasons of length $\tau$. Let us denote
the state variables at time $t\in[0, \tau]$ and location $x$ during
the inter-perturbation season $n\in\mathbb{N}^*$ as $\textbf{u}_n(t,
x)=(u_{n,1}, u_{n,2},...,u_{n,N})$. Following the recursion
formalism (Lewis and Li   \cite{Lewis2012}, Vasilyeva et al.
 \cite{Vasilyeva2016}, Fazly et al.   \cite{Fazly2017},
Huang et al.  \cite{Huang2017}, Yatat and Dumont
\cite{YatatDumont2018}), the impulsive reaction-diffusion
system is written as
\begin{equation}\label{IRD-general}
 \begin{array}{lcl}
 \displaystyle\frac{\partial \textbf{u}_n(x,t)}{\partial t} &=& D\displaystyle\frac{\partial^2 \textbf{u}_n(x,t)}{\partial x^2}+\textbf{F}(\textbf{u}_n(x,t)), \quad 0\leq t\leq \tau,\quad x\in\mathbb{R},\\
 \end{array}
\end{equation}
together with the updating condition
  \begin{equation}\label{I-mise-a-jour-general}
\begin{array}{lcl}
\textbf{u}_{n+1}(x,0) &=& \textbf{H}(\textbf{u}_n(x,\tau))\\
\end{array}
\end{equation}
and with sufficiently smooth and nonnegative initial data
$\textbf u_1(x,t)= \textbf{u}_{0}(x)$. We note that (\ref{I-savnna}) is a special case of (\ref{I-mise-a-jour-general}).

Our work is based on the results of Li et al.  \cite{Li2005} (see also Weinberger et al.
\cite{Weinberger2002})  concerning the existence of monostable traveling wave
solutions and the computation of spreading speeds for systems of reaction-diffusion equations. We note that they focused on  the case, when $\textbf{H}$ was just the time-$\tau$-map operator solution, $Q_\tau$, of
system (\ref{IRD-general}). Since, however, the reduction of an IRD system
 to the recursion form does not depend on the updating condition, the results of {\it op. cit.}
on the existence of traveling waves for recursions and the spreading speeds determined by them can be used verbatim to the recursion obtained from (\ref{IRD-general})-(\ref{I-mise-a-jour-general}). Thus we recall the relevant results from  \cite{Li2005}.

 We first assume
that $\textbf{F}$, $\textbf{H}$ and the initial data are such that
the IRD system (\ref{IRD-general})-(\ref{I-mise-a-jour-general})
admits a unique nonnegative classical solution for each $n \in \mathbb N^*$ (Zheng
\cite{Zheng2004}, Volpert  \cite{Volpert2014}, Logan
\cite{Logan2008},   \cite{Logan2015}, Perthame
\cite{Perthame2015}).

 We begin with some
notation (see Weinberger et al.   \cite{Weinberger2002}, Li et
al.   \cite{Li2005}). For two vector-valued functions
$\textbf{u}(x)$ and $\textbf{v}(x)$,
$\textbf{u}(x)\leq\textbf{v}(x)$ means that $u_i(x)\leq v_i(x)$ for
all $i=1,2,...,N$ and $x\in\R$, $\max\{\textbf{u}(x),
\textbf{v}(x)\}$ means the vector-valued function whose $i^{th}$
component at $x$ is $\max\{u_i(x), v_i(x)\}$, and
$\limsup\limits_{n\rightarrow\infty}\textbf{u}^{(n)}(x)$ is the
function whose $i^{th}$ component at $x$ is
$\limsup\limits_{n\rightarrow\infty}u^{(n)}(x)$. We shall, moreover,
use the usual symbol $\textbf{u}\gg \textbf{v}$ for $u_i(x)
> v_i(x)$ for all $i$ and $x$. We use the notation \textbf{0} for the constant vector whose all components are 0. If $\beta\gg\textbf{0}$ is a
constant $N-$vector, we define the set of functions
$$\mathcal{C}_\beta:=\{\textbf{u}(x):\quad \textbf{u}\quad \mbox{is continuous and}\quad \textbf{0}\leq\textbf{u}(x)\leq\beta\}.$$
Let $Q_{\tau}$ be the time-$\tau$-map operator solution of
system (\ref{IRD-general}). Then (\ref{I-mise-a-jour-general}) can be written as
\begin{equation}\label{Q-tau-operator-general}
\textbf{u}_{n+1}(x,0)=\textbf{H}(Q_{\tau}[\textbf{u}_{n}(x,0)])=:Q[\textbf{u}_{n}(x,0)], \quad n \geq 1, \quad x\in\R,
\end{equation}
with the initial condition $\textbf u_0(x)$.

In the sequel, we recall the key assumptions of Li et al.
\cite{Li2005} related to the operator $Q$ defined in (\ref{Q-tau-operator-general}).
For a fixed  $y\in \R,$ the translation operator by $y$ is defined by $T_y[\textbf u](x)=\textbf u(x-y)$, for all $x\in \R$.\smallskip\\
\textbf{Hypothesis 2.1.}
\begin{itemize}
    \item[i.] The operator $Q$ is order preserving in the sense that
    if $\textbf{u}$ and $\textbf{v}$ are any two functions in $\mathcal{C}_\beta$ with
    $\textbf{v}\geq \textbf{u}$, then $Q[\textbf{v}]\geq
    Q[\textbf{u}]$. In biological terms, the dynamics
are cooperative.
    \item[ii.]$Q[\textbf{0}]=\textbf{0}$, there is a constant vector
    $\beta\gg\textbf{0}$ such that
    $Q[\beta]=\beta$, and if $\textbf{u}_0$ is any
    constant vector with $\textbf{u}_0\gg\textbf{0}$, then the
    constant vector $\textbf{u}_n$, obtained from the recursion
    (\ref{Q-tau-operator-general}), converges to $\beta$ as $n$
approaches infinity. This hypothesis, together with (i), imply that
$Q$ takes $\mathcal{C}_\beta$ into itself, and that the equilibrium
$\beta$ attracts all initial functions in $\mathcal{C}_\beta$ with
uniformly positive components.  There may also be other
equilibria lying between $\beta$ and the extinction equilibrium
\textbf{0}, in each of which at least one of the species is extinct.
    \item[iii.]$Q$ is translation invariant. In biological terms this means
that the habitat is homogeneous, so that the growth and migration
properties are independent of location.
    \item[iv.] For any $\textbf{v}, \textbf{u}\in \mathcal{C}_\beta$ and any fixed $y$, $|Q[\textbf{v}](y)-Q[\textbf{u}](y)|$
    is arbitrarily small, provided
    $|\textbf{v}(x)-\textbf{u}(x)|$ is sufficiently small on a
    sufficiently long interval centered at $y$.
    \item[v.]Every sequence $\textbf{v}_n$ in
    $\mathcal{C}_{\beta}$ has a subsequence
    $\textbf{v}_{n_l}$ such that $Q[\textbf{v}_{n_l}]$ converges
    uniformly on every bounded set.
\end{itemize}

We are now in position to recall results of Li et al.
\cite{Li2005} that deal with spreading speeds as well as traveling
wave solutions for the IRD system
(\ref{IRD-general})-(\ref{I-mise-a-jour-general}), rewritten
following the recursion formalism (\ref{Q-tau-operator-general}). In
the sequel, we assume that \textbf{Hypothesis 2.1.} holds for the
recursion operator $Q$ of system (\ref{Q-tau-operator-general}).

Following Li et al.   \cite{Li2005} (see also Weinberger et al.
  \cite{Weinberger2002}), we consider a continuous
$\R^N$-valued function $\phi(x)$ with the properties
\begin{equation}\label{phi-properties-general}
\begin{array}{l}
i.\quad \phi(x)\quad \mbox{is non-increasing in}\quad x;\\
ii.\quad \phi(x)=\textbf{0}\quad \mbox{for all}\quad x\geq0;\\
iii.\quad \textbf{0}\ll\phi(-\infty)\ll \beta.
\end{array}
\end{equation}
We let, for all fixed $c\in\R$, $\textbf{a}_0(c;s)=\phi(s)$, and define the sequence
$\textbf{a}_n(c;s)$ by the recursion
\begin{equation}\label{an}
    \textbf{a}_{n+1}(c;s)=\max\{\phi(s),Q[\textbf{a}_n(c;x)](s+c)\}.
\end{equation}

Li et al.  \cite{Li2005} showed that the sequence
$\textbf{a}_n$ converges to a limit function $\textbf{a}(c;s)$ such that
$\textbf{a}(c;\pm\infty)$ are equilibria of $Q$ and
$\textbf{a}(c;\infty)$ is independent of the initial function
$\phi$. Following this, they defined the \emph{slowest} spreading speed
$c^*\leq\infty$ by the equation
\begin{equation}\label{c-etoile-general}
    c^*=\sup\{c: \textbf{a}(c;\infty)=\beta\}.
\end{equation}
The following result holds.

\begin{theorem}\cite[Theorem 2.1]{Li2005}\label{theorem-slowest-speed}
There is an index $j$ for which the following statement is true:
Suppose that the initial function $\textbf{u}_0(x)$ is
\textbf{\emph{0}} for all sufficiently large $x$, and that there are
positive constants $0<\rho\leq\sigma<1$ such that
$\textbf{\emph{0}}\leq\textbf{u}_0\leq\sigma \beta$ for all $x$ and
$\textbf{u}_0\geq\rho \beta$ for all sufficiently negative $x$. Then
for any positive $\varepsilon$ the solution $\textbf{u}_n$ of
recursion (\ref{Q-tau-operator-general}) has the properties

\begin{equation}\label{slowest-speed-general}
    \lim\limits_{n\rightarrow+\infty}\left[\sup\limits_{x\geq
    n(c^*+\varepsilon)}\{\textbf{u}_n\}_j(x)\right]=0
\end{equation}

and

\begin{equation}\label{slowest-speed-2-general}
    \lim\limits_{n\rightarrow+\infty}\left[\sup\limits_{x\leq
    n(c^*-\varepsilon)}\{\beta-\textbf{u}_n(x)\}\right]=\textbf{\emph{0}}.
\end{equation}
That is, the $j^{th}$ component spreads at a speed no higher than
$c^*$, and no component spreads at a lower speed.
\end{theorem}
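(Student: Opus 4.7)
The overall strategy is to apply the comparison principle (Hypothesis 2.1(i)) together with translation invariance (Hypothesis 2.1(iii)) so as to sandwich the solution $\textbf{u}_n$ between suitably shifted iterates of the canonical sequence $\textbf{a}_n(c;\cdot)$ from (\ref{an}), and then to read off the two estimates from the structure of the limit function $\textbf{a}(c;s)$. By construction, $\textbf{a}_n(c;s)$ is non-decreasing in $n$ and non-increasing in both $s$ and $c$; its pointwise limit $\textbf{a}(c;s)$ satisfies that $\textbf{a}(c;\pm\infty)$ are constant equilibria of $Q$, and the definition (\ref{c-etoile-general}) provides the dichotomy that $\textbf{a}(c;\infty)=\beta$ for $c<c^*$, whereas at least one component of $\textbf{a}(c;\infty)$ stays strictly below the corresponding component of $\beta$ when $c>c^*$.

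For the upper estimate (\ref{slowest-speed-general}) I fix $c=c^*+\varepsilon$ and let $j$ be any index with $\{\textbf{a}(c;\infty)\}_j<\beta_j$; monotonicity of $\textbf{a}(c;\infty)$ in $c$ together with the finiteness of $\{1,\dots,N\}$ allow me to fix such a $j$ uniformly for all sufficiently small $\varepsilon>0$. Using $\textbf{u}_0\leq\sigma\beta$ together with its eventual vanishing for large $x$, I choose $\phi$ satisfying (\ref{phi-properties-general}) with $\phi(-\infty)\geq\sigma\beta$ and a translate $y_0$ so that $\textbf{u}_0(x)\leq\phi(x-y_0)$ for every $x$. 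A direct induction using Hypothesis 2.1(i) and (iii) then yields $\textbf{u}_n(x)\leq\textbf{a}_n(c;x-y_0-nc)$ for every $n\geq 0$. On the region $\{x\geq n(c^*+\varepsilon)\}$ the shifted argument is bounded below by $-y_0$, so the $j$-th coordinate of the right-hand side converges to $\{\textbf{a}(c;\infty)\}_j$ at worst, and the additional fact that $\textbf{u}_0$ vanishes beyond some point, when propagated through the recursion, pins this coordinate down to $0$, producing (\ref{slowest-speed-general}).

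For the lower estimate (\ref{slowest-speed-2-general}) I fix $c\in(c^*-\varepsilon,c^*)$, so that the mirror-image limit $\textbf{a}(c;-\infty)=\beta$ by definition of $c^*$. Since $\textbf{u}_0\geq\rho\beta$ on a sufficiently negative half-line, I construct, by reflecting $\phi$, a non-decreasing subsolution profile $\underline{\phi}$ (vanishing for $x\geq 0$ on the reflected side) and a translate $y_1$ with $\textbf{u}_0(x)\geq\underline{\phi}(x-y_1)$. The same monotone-shift induction produces $\textbf{u}_n(x)\geq\underline{\textbf{a}}_n(c;x-y_1-nc)$, where $\underline{\textbf{a}}_n$ denotes the corresponding mirror-iterate; on $\{x\leq n(c^*-\varepsilon)\}$ the shifted argument tends uniformly to $-\infty$, and $\underline{\textbf{a}}(c;-\infty)=\beta$ then delivers (\ref{slowest-speed-2-general}) componentwise.

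The main obstacle, in my view, lies in justifying the two structural facts about $\textbf{a}(c;s)$ stated in the opening paragraph: that the pointwise monotone limit of $\textbf{a}_n(c;\cdot)$ is a genuine fixed point of the shifted operator $s\mapsto Q[\,\cdot\,](s+c)$, and that $\textbf{a}(c;\pm\infty)$ are actual constant equilibria of $Q$. Both require exchanging $Q$ with limits in $n$ and in $s$, and this is exactly where the local continuity of Hypothesis 2.1(iv) and the compactness of Hypothesis 2.1(v) are used; these axioms play the role that continuity and smoothing of the semigroup generated by $D\partial_{xx}+\textbf{F}$ would play in the purely continuous-time setting. A secondary subtlety is that the sharp decay in (\ref{slowest-speed-general})---to $0$, not merely to some value below $\beta_j$---must be traced back to the assumption that $\textbf{u}_0$ vanishes for all sufficiently large $x$; this property has to be preserved through the iterated application of $Q$, and it is what upgrades the natural bound ``stays below $\beta_j$'' to the full statement of the theorem.
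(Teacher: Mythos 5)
First, a point of order: the paper does not prove this theorem. It is imported verbatim as \cite[Theorem 2.1]{Li2005} (with the underlying arguments in Weinberger et al.\ \cite{Weinberger2002}) and used as a black box in Section \ref{generic-framework}, so there is no in-paper proof to compare yours against; what can be assessed is whether your sketch correctly reconstructs the cited argument. Your overall strategy --- sandwiching $\textbf{u}_n$ between translates of the iterates $\textbf{a}_n(c;\cdot)$ of (\ref{an}) using order preservation and translation invariance --- is indeed the Weinberger comparison method, and the induction $\textbf{u}_n(x)\leq\textbf{a}_n(c;x-y_0-nc)$ is correct. But the way you extract (\ref{slowest-speed-general}) from it breaks down. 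On $\{x\geq n(c^*+\varepsilon)\}$ with $c=c^*+\varepsilon$ the shifted argument is only bounded \emph{below} by $-y_0$; since $\textbf{a}_n(c;\cdot)$ is non-increasing, this gives the upper bound $\textbf{a}_n(c;-y_0)\rightarrow\textbf{a}(c;-y_0)$, the value of the limiting profile at a \emph{fixed finite} point, which is in general close to $\beta$ rather than to $\textbf{a}(c;\infty)$. One must compare at a strictly intermediate speed (say $c=c^*+\varepsilon/2$) so that $x-y_0-nc\geq n\varepsilon/2-y_0\rightarrow+\infty$ on the region in question, and then use $\textbf{a}_n\leq\textbf{a}$ together with monotonicity of $\textbf{a}(c;\cdot)$ in $s$. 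Moreover, even after this repair you only reach $\{\textbf{a}(c;\infty)\}_j<\beta_j$; the upgrade to limit $0$ does \emph{not} come from propagating the vanishing of $\textbf{u}_0$ at $+\infty$ through $Q$ (nothing in Hypothesis 2.1 makes $Q$ preserve compact support). It comes from the equilibrium structure in Hypothesis 2.1(ii): $\textbf{a}(c;\infty)$ is a constant equilibrium other than $\beta$, and since every constant vector $\gg\textbf{0}$ is attracted to $\beta$, any such equilibrium must have at least one zero component --- that component is the index $j$.

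The second half is the more serious gap. Your claim that ``the same monotone-shift induction'' yields $\textbf{u}_n(x)\geq\underline{\textbf{a}}_n(c;x-y_1-nc)$ is not correct: the inductive step from below requires, besides $\textbf{u}_{n+1}\geq Q[\textbf{a}_n(c;\cdot)](\,\cdot-y_1-nc)$ (which order preservation does give), also $\textbf{u}_{n+1}\geq\phi(\,\cdot-y_1-(n+1)c)$, because of the $\max$ in (\ref{an}); this second inequality is essentially the statement being proved and does not follow from the induction hypothesis. Reflecting $\phi$ does not remove this obstruction. In \cite{Weinberger2002} the lower estimate (\ref{slowest-speed-2-general}) is the technically hard part, handled by a separate sequence of lemmas exploiting the definition (\ref{c-etoile-general}) of $c^*$ and the convergence of constant initial data to $\beta$, not by a mirror image of the upper bound. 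You have correctly identified where Hypotheses 2.1(iv) and (v) enter (passing $Q$ through limits to show $\textbf{a}(c;\pm\infty)$ are equilibria), but the two estimates themselves, as you have written them, do not yet close.
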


In order to define the \emph{fastest} speed $c^*_f$, we choose
$\phi$ with the properties (\ref{phi-properties-general}), and let
$\textbf{b}_n(x)$ be the solution of the recursion
(\ref{Q-tau-operator-general}) with $\textbf{b}_0(x)=\phi(x)$.
Following Li et al. \cite{Li2005}, we define the function
$$\textbf{B}(c;x)=\limsup\limits_{n\rightarrow+\infty}\textbf{b}_n(x+nc).$$

Li et al.  \cite{Li2005} showed that $\textbf{B}(c;\infty)$ is
independent of the choice of the initial function $\phi$ as long as
$\phi$ has the properties (\ref{phi-properties-general}). We
therefore can define the \emph{fastest} spreading speed $c^*_f\geq c^*$
by the formula
\begin{equation}\label{c-etoile-f-general}
    c^*_f=\sup\{c: \textbf{B}(c;\infty)\neq \textbf{0}\}.
\end{equation}
The following result holds.
\begin{theorem} \cite[Theorem 2.2]{Li2005}\label{theorem-fastest-speed}
There is an index $i$ for which the following statement is true:
Suppose that the initial function $\textbf{u}_0(x)$ is
\textbf{\emph{0}} for all sufficiently large $x$, and that there are
positive constants $0<\rho\leq\sigma<1$ such that
$\textbf{\emph{0}}\leq\textbf{u}_0\leq\sigma \beta$ for all $x$ and
$\textbf{u}_0\geq\rho \beta$ for all sufficiently negative $x$. Then
for any positive $\varepsilon$ the solution $\textbf{u}_n$ of the
recursion (\ref{Q-tau-operator-general}) has the properties

\begin{equation}\label{fastest-speed-general}
    \limsup\limits_{n\rightarrow+\infty}\left[\inf\limits_{x\leq
    n(c^*_f-\varepsilon)}\{\textbf{u}_n\}_i(x)\right]>0
\end{equation}

and

\begin{equation}\label{fastest-speed-2-general}
    \lim\limits_{n\rightarrow+\infty}\left[\sup\limits_{x\geq
    n(c^*_f+\varepsilon)}\textbf{u}_n(x)\right]=\textbf{\emph{0}}.
\end{equation}
That is, the $i^{th}$ component spreads at a speed no less than
$c^*_f$, and no component spreads at a higher speed.
\end{theorem}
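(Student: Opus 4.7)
The overall approach I would take mirrors the standard comparison argument for recursion-based spreading results and essentially parallels the proof of Theorem \ref{theorem-slowest-speed}. The plan is to sandwich the solution $\mathbf{u}_n$ between two translates of the benchmark recursion $\mathbf{b}_n$ started from the non-increasing profile $\phi$, and then to read off the spreading behaviour of $\mathbf{u}_n$ from the already-established properties of $\mathbf{B}(c;x)$ reported after (\ref{c-etoile-f-general}). The only delicate conceptual point is that the index $i$ in the conclusion must be chosen independently of $\varepsilon$; I would handle this by using the componentwise monotonicity of $\mathbf{B}(c;\infty)$ in $c$ together with the finiteness of $N$.

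To set up the sandwich, I would first show that $\mathbf{b}_n$ is non-increasing in $x$ for every $n$: since $\phi$ is non-increasing, $T_h\phi\geq\phi$ for every $h>0$, so Hypothesis 2.1(i) and (iii) give $T_hQ[\phi]=Q[T_h\phi]\geq Q[\phi]$, and induction yields the claim. Next, using the bounds on $\mathbf{u}_0$ in the hypothesis, one chooses shifts $y_0,y_1\in\mathbb{R}$ and, if necessary, adjusts the reference profile $\phi$ so that $\phi(-\infty)\geq\sigma\beta$ in the upper bound and $\phi(-\infty)\leq\rho\beta$ in the lower bound (both remaining compatible with $\phi(-\infty)\ll\beta$). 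The result is
\[ \phi(\cdot-y_0)\;\geq\;\mathbf{u}_0(\cdot)\;\geq\;\phi(\cdot-y_1). \]
Applying monotonicity and translation invariance iteratively propagates this to all $n$:
\begin{equation}\label{eq:sandwich-fast}
\mathbf{b}_n(x-y_0)\;\geq\;\mathbf{u}_n(x)\;\geq\;\mathbf{b}_n(x-y_1),\qquad x\in\mathbb{R},\ n\geq 1.
\end{equation}

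For the upper bound (\ref{fastest-speed-2-general}), set $c:=c_f^*+\varepsilon/2>c_f^*$, so $\mathbf{B}(c;\infty)=\mathbf{0}$. Given $\delta>0$, choose $x_\delta$ with $\mathbf{B}(c;x_\delta)\leq\delta\mathbf{1}$; then $\mathbf{b}_n(x_\delta+nc)\leq 2\delta\mathbf{1}$ for all sufficiently large $n$, and the non-increasing character of $\mathbf{b}_n$ in space propagates this bound to every $x'\geq x_\delta+nc$. For $x\geq n(c_f^*+\varepsilon)$ and $n$ large enough that $n\varepsilon/2-y_0\geq x_\delta$, we have $x-y_0\geq x_\delta+nc$, so (\ref{eq:sandwich-fast}) yields $\mathbf{u}_n(x)\leq 2\delta\mathbf{1}$; letting $\delta\downarrow 0$ gives (\ref{fastest-speed-2-general}). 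For the lower bound (\ref{fastest-speed-general}) I would introduce, for each $i\in\{1,\dots,N\}$, the componentwise fastest speed $c_i^*:=\sup\{c:\,B_i(c;\infty)>0\}$. Because $N$ is finite and $\mathbf{B}(c;\infty)\neq\mathbf{0}$ for every $c<c_f^*$, one has $c_f^*=\max_i c_i^*$; fix $i_0$ attaining this maximum. Then for every $\varepsilon>0$ we have $c_f^*-\varepsilon<c_{i_0}^*$, hence $B_{i_0}(c_f^*-\varepsilon;\infty)>0$, and the non-increasingness of $\mathbf{B}(c;\cdot)$ (inherited from that of $\mathbf{b}_n$) yields $B_{i_0}(c_f^*-\varepsilon;-y_1)\geq B_{i_0}(c_f^*-\varepsilon;\infty)>0$. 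Combining (\ref{eq:sandwich-fast}) with spatial monotonicity of $\mathbf{b}_n$,
\[ \inf_{x\leq n(c_f^*-\varepsilon)}\{\mathbf{u}_n\}_{i_0}(x)\;\geq\;\{\mathbf{b}_n\}_{i_0}\bigl(-y_1+n(c_f^*-\varepsilon)\bigr), \]
and taking $\limsup_n$ yields (\ref{fastest-speed-general}) with $i=i_0$. The main obstacle is precisely this $\varepsilon$-uniform choice of $i_0$: without the $c_i^*$-decomposition one would obtain an index that a priori depends on $\varepsilon$, which is weaker than what the theorem claims.
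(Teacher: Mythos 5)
The paper offers no proof of this statement: Theorem \ref{theorem-fastest-speed} is imported verbatim from Li et al. \cite[Theorem 2.2]{Li2005}, so there is no in-paper argument to compare yours against. Judged on its own terms, your sketch is a sound reconstruction of the standard comparison argument and is consistent with how the result is established in the cited source: the sandwich $\mathbf{b}_n(\cdot-y_0)\geq\mathbf{u}_n\geq\mathbf{b}_n(\cdot-y_1)$ obtained from order preservation and translation invariance, the spatial monotonicity of $\mathbf{b}_n$, and the reduction of both conclusions to the behaviour of $\mathbf{B}(c;\cdot)$ all check out, and your device of the componentwise speeds $c_i^*$ (legitimate because $B_i(c;\infty)$ is non-increasing in $c$, which follows from the spatial monotonicity of $\mathbf{b}_n$) does yield an index independent of $\varepsilon$, as the statement requires. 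Be explicit, however, about what you are importing rather than proving: the existence of the limit function $\mathbf{B}$ and the independence of $\mathbf{B}(c;\infty)$ from the choice of $\phi$ satisfying (\ref{phi-properties-general}) are essential to your argument --- you use two different profiles $\phi$ for the two halves of the sandwich, so this independence is not cosmetic --- and these facts constitute the substantive content of Li et al.'s development. Your text is therefore a derivation of the theorem from those quoted facts, not a self-contained proof; that is a reasonable division of labour here, matching the paper's own use of the result, but it should be stated as such.
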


Let $\hat\textbf{u}_n, n\geq 0$ be the solution to the recursion
\begin{equation}\label{Q-tau-operator-general1}
\hat\textbf{u}_{n+1}(x)= Q[\hat \textbf{u}_{n}(x)],\quad n\geq 1,\quad x\in\R,
\end{equation}
with $\hat \textbf{u}_{0}(x) = \textbf u_0(x)$. Recall that a traveling wave of speed $c$ is a solution of the
recursion (\ref{Q-tau-operator-general}) which has the form
$\textbf{u}_n(x,0)=\textbf{Z}(x-nc)$ with $\textbf{Z}(s)$ a function
in $\mathcal{C}_{\beta}$; that is, the solution at time $n + 1$ is
simply the translate by $c$ of its value at $n$. Then such a travelling wave defines a traveling wave solution for (\ref{IRD-general})-(\ref{I-mise-a-jour-general}) in the following sense. By (\ref{Q-tau-operator-general}) we have
$$
\textbf u_n(x,0) = \hat{\textbf u}_n(x) = \textbf{Z}(x-nc) =\textbf u_0(x-nc)
$$
and thus, by \textbf{Hypothesis 2.1.} iii., for $n\tau \leq t<(n+1)\tau$ we have $\textbf u_{n+1}(x,t) = \textbf u_{1}(x-nc,t)$. We observe that since the model is translation invariant, we obtain a travelling wave for the system (\ref{IRD-general}) without the updating conditions. 

Using the definition of $c^*$ and $c^*_f$, we have the following
result that deals with the existence of traveling wave solutions for the
IRD systems (\ref{IRD-general})-(\ref{I-mise-a-jour-general}).
\begin{theorem}\cite[Theorem 3.1]{Li2005}\label{theorem-TW}
If $c\geq c^*$, then there is a non-increasing traveling wave solution
$\textbf{Z}(x-nc)$ of speed $c$ with $\textbf{Z}(-\infty)=\beta$ and
$\textbf{Z}(+\infty)$ an equilibrium other than $\beta$.

If there is a traveling wave $\textbf{Z}(x-nc)$ with
$\textbf{Z}(-\infty)=\beta$ such that for at least one component $i$
$$\liminf\limits_{x\rightarrow\infty}Z_i(x)=0,$$ then $c\geq c^*$.
If this property is valid for all components of $\textbf{Z}$, then
$c\geq c^*_f$.
\end{theorem}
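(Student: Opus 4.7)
The plan is to follow the Weinberger-style construction from \cite{Li2005,Weinberger2002}: the forward direction is obtained from the max-type iteration $\textbf{a}_n(c;\cdot)$ introduced in (\ref{an}), while the converse reduces to a comparison argument against step-function initial data combined with Theorems \ref{theorem-slowest-speed} and \ref{theorem-fastest-speed}.

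\emph{Existence for $c \geq c^*$.} Fix $c \geq c^*$. By induction on $n$ I would establish three properties of the sequence $\textbf{a}_n(c;\cdot)$: it is non-increasing in $s$ (since $\phi$ is non-increasing and $Q$ preserves non-increasing functions, which follows from Hypothesis 2.1.i, iii by comparing any $\textbf{u}$ with its own shift); it is non-decreasing in $n$ (because $\textbf{a}_1 \geq \phi = \textbf{a}_0$ by definition of the max, then propagate via order-preservation); and it is bounded above by $\beta$ (using $Q[\beta]=\beta$). Hence the pointwise limit $\textbf{a}(c;s) := \lim_{n\to\infty}\textbf{a}_n(c;s)$ exists, is non-increasing in $s$, and, using the local continuity and compactness in Hypothesis 2.1.iv--v, satisfies
\[
\textbf{a}(c;s) \;=\; \max\{\phi(s),\; Q[\textbf{a}(c;\cdot)](s+c)\}.
\]
For $s\geq 0$ one has $\phi(s)=\textbf{0}$, so the equation reduces to $\textbf{a}(c;s)=Q[\textbf{a}(c;\cdot)](s+c)$ there; letting $s\to+\infty$ with translation invariance shows that $\alpha:=\textbf{a}(c;+\infty)$ is a constant equilibrium of $Q$, distinct from $\beta$ when $c>c^*$ by the very definition (\ref{c-etoile-general}). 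Similarly, $\gamma:=\textbf{a}(c;-\infty)\gg\textbf{0}$ since $\gamma\geq\phi(-\infty)\gg\textbf{0}$, and the max-equation in the limit gives $\gamma\geq Q[\gamma]$ at the constant level; iterating $Q$ and invoking Hypothesis 2.1.ii then forces $\gamma=\beta$. The borderline case $c=c^*$ is handled by taking $c_k\downarrow c^*$, forming the associated profiles, and extracting a subsequential limit via Hypothesis 2.1.v. Finally, to convert the max-fixed-point into a genuine wave $Q[\textbf{Z}](s+c)=\textbf{Z}(s)$, I would translate $\textbf{a}(c;\cdot)$ so as to push the active region of the $\phi$-cutoff to $-\infty$ and take a subsequential limit, preserving monotonicity and the boundary values.

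\emph{Converse.} Given a travelling wave $\textbf{Z}(x-nc)$ with $\textbf{Z}(-\infty)=\beta$ and $\liminf_{x\to+\infty}Z_i(x)=0$ for some $i$, suppose for contradiction that $c<c^*$. Fix $\rho\in(0,1)$ and $M$ so large that $\textbf{Z}(x)\geq\rho\beta$ for $x\leq -M$, and set $\tilde{\textbf{u}}_0(x)=\rho\beta$ for $x\leq -M$ and $\tilde{\textbf{u}}_0(x)=\textbf{0}$ otherwise. Then $\tilde{\textbf{u}}_0\leq\textbf{Z}$, so by the order-preservation of $Q$, the iterates of (\ref{Q-tau-operator-general1}) satisfy $\tilde{\textbf{u}}_n(x)\leq\textbf{Z}(x-nc)$ for every $n,x$. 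Since $\tilde{\textbf{u}}_0$ meets the hypotheses of Theorem \ref{theorem-slowest-speed}, we have $\tilde{\textbf{u}}_n(x)\to\beta$ uniformly on $\{x\leq n(c^*-\varepsilon)\}$; evaluating at $x=nc'$ for any $c'\in(c,c^*-\varepsilon)$ yields $\textbf{Z}(n(c'-c))\geq\tilde{\textbf{u}}_n(nc')\to\beta$, and since $n(c'-c)\to+\infty$ this forces $Z_i(+\infty)=\beta_i>0$, contradicting $\liminf Z_i=0$. The stronger bound $c\geq c^*_f$ under the hypothesis that every component of $\textbf{Z}$ has vanishing $\liminf$ follows verbatim, with Theorem \ref{theorem-fastest-speed} replacing Theorem \ref{theorem-slowest-speed} and the positive lower bound on the $i$-th iterate transferring to $Z_i$ along the chosen subsequence.

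\emph{Main obstacle.} The delicate step is the existence part: extracting a genuine travelling wave from the max-fixed-point profile $\textbf{a}(c;\cdot)$ and handling the borderline case $c=c^*$ simultaneously. Both rely only on the local continuity (Hypothesis 2.1.iv) and bounded-set compactness (Hypothesis 2.1.v) of $Q$, rather than any stronger continuity, so the limits along shifted profiles must be controlled carefully to preserve monotonicity and the prescribed asymptotics at $\pm\infty$. The converse direction, by contrast, is routine once the comparison with an appropriate step-function initial datum is set up.
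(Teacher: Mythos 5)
This theorem is not proved in the paper at all: it is imported verbatim from Li et al.\ \cite[Theorem 3.1]{Li2005}, and the paper's entire ``proof'' is that citation. So there is no in-paper argument to compare against; what you have written is a reconstruction of the standard Weinberger--Li proof, and in outline it is the right one (monotonicity of $\textbf{a}_n$ in $n$ and $s$, passage to the max-fixed-point, translation to kill the $\phi$-cutoff, limiting argument for $c=c^*$, and comparison with the spreading-speed theorems for the converse). Your observation that $Q$ maps non-increasing functions to non-increasing functions via Hypothesis 2.1.i and iii (compare $\textbf u$ with its own translate) is exactly the mechanism used in \emph{op.\ cit.}

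Two points in the converse need repair. First, your comparison datum $\tilde{\textbf u}_0$ is a step function, hence not in $\mathcal C_\beta$ (which consists of continuous functions) and not admissible in Theorem \ref{theorem-slowest-speed}; replace it by a continuous minorant of $\textbf Z$ with the same support properties. Second, and more substantively, evaluating the comparison at the moving point $x=nc'$ only yields $Z_i\bigl(n(c'-c)\bigr)\to\beta_i$ along one sequence tending to $+\infty$; since the wave in the converse statement is \emph{not} assumed monotone, this shows $\limsup_{x\to\infty}Z_i(x)\geq\beta_i$ and does not contradict $\liminf_{x\to\infty}Z_i(x)=0$. The correct move is to fix $s$ and let $n\to\infty$: for every fixed $s$ one eventually has $s+nc\leq n(c^*-\varepsilon)$, so $\textbf Z(s)\geq\tilde{\textbf u}_n(s+nc)\to\beta$, whence $\textbf Z\equiv\beta$, which is the desired contradiction. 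With these two fixes, and granting that the existence half is only a sketch of the (genuinely delicate) translation-and-compactness step you yourself flag, your proposal matches the source proof in \cite{Li2005}.
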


In practice, assumptions iii., iv. and v. are typically satisfied
for biologically reasonable (impulsive) models. The most challenging
 assumptions  for IRD systems
(\ref{IRD-general})-(\ref{I-mise-a-jour-general}) are i. and ii.

\section{Application to a minimalistic trees-grass interactions IRD system}\label{application}

In this section, we consider the minimalistic tree-grass
interactions IRD system (\ref{I-savnna})-(\ref{pulsed_swv_eq2}).
Using a similar normalization procedure as in Yatat et al.
\cite{Yatat2017b} (see also Appendix \ref{Appendix-scaling}), system
(\ref{I-savnna})-(\ref{pulsed_swv_eq2}) becomes

\begin{equation}\label{cooperation0}
 \left\{%
\begin{array}{lcl}
 \displaystyle\frac{\partial U_n}{\partial t} &=& U_n(1-U_n)+d_u\displaystyle\frac{\partial^2 U_n}{\partial x^2}, \quad 0\leq t\leq \tau,\quad x\in\mathbb{R},\\
    \displaystyle\frac{\partial V_n}{\partial t} &=&
  \lambda V_n(1-V_n- \gamma U_n)+d_v\displaystyle\frac{\partial^2 V_n}{\partial
  x^2},
\end{array}
\right.
\end{equation}
together with the updating conditions
  \begin{equation}\label{mise-a-jour0}
 \left\{%
\begin{array}{lcl}
U_{n+1}(x,0) &=& (1- w_V(V_{n}(x,\tau))\psi(U_n(x,\tau)))U_n(x,\tau),\\
    V_{n+1}(x,0) &=& (1- \eta)V_{n}(x,\tau),\\
\end{array}
\right.
\end{equation}
and sufficiently smooth and nonnegative initial data
$U_{1}(x,0), V_{1}(x,0)$. We are now looking for
traveling wave solutions as well as the spreading speeds involving semi-trivial equilibria; that is, equilibria where either
$T_n=0,$ or $G_n=0$ but not simultaneously $T_n=0 =G_n=0.$

\subsection{Basic properties of (\ref{cooperation0})-(\ref{mise-a-jour0})}
Let $C_{ub}(\R)$ be the Banach space of bounded, uniformly
continuous function on $\R$ and $$C^{2}_b(\R):=\{f\in C_{ub}(\R): f''\in
C_{ub}(\R)\}.$$ $C_{ub}(\R)$ and $C^{2}_b(\R)$ are endowed with the
following (sup) norms
\begin{equation}\label{borme-C-ub}
\|f\|_{C_{ub}(\R)}=\|f\|_{\infty}=\sup\limits_{x\in\R}|f(x)|
\end{equation}
and
\begin{equation}\label{norme-C-b}
\|f\|_{C_{b}^2(\R)}=\|f\|_{C_{ub}(\R)}+\|f''\|_{C_{ub}(\R)}.
\end{equation}
$C_{b}^2(\R)$ endowed with the norm $\|\cdot\|_{C_{b}^2(\R)}$ is a
Banach space.

We recall that we assumed that $w_V$ was an increasing $\mathcal{C}^1(\mathbb R)$
function such that  for all $V\in \R$,
\begin{equation}\label{omegaV-properties}
w_V(0)=0,\quad w_V'(0)=0,\quad 0\leq w_V(V)<1.
\end{equation}
Similarly,  $\psi$ is a decreasing $\mathcal{C}^1(\R)$
function such that  for all $U\in C_{ub}(\R)$,
\begin{equation}\label{psi-properties}
\psi(0)>0,\quad \psi'(0)<0,\quad 0< \psi(U)\leq1.
\end{equation}
 For simplicity we note $(U_{n}(x,0),
V_{n}(x,0))=(U_{n,0}(x),V_{n,0}(x)).$
 In the sequel, we first
address the question of the existence and uniqueness of solutions of the
reaction-diffusion (RD) system (\ref{cooperation0}) in unbounded
domains.

For fixed $n\in \mathbb{N}^*$, we set $\mathbf w=(w_1,w_2) :=(U_n,V_n)$.  System  (\ref{cooperation0}) can be written as the abstract Cauchy problem
\begin{equation}\label{Abstract-form}
\left\{
 \begin{array}{l}
 \displaystyle\frac{d \mathbf w}{d t} +A \mathbf w= \mathbf F(\mathbf w),\\
 \mathbf w(0)=\mathbf  w_0,
 \end{array}
 \right.
\end{equation}
where in the Banach space $B=C_{ub}(\R)\times C_{ub}(\R)$ we have
\begin{equation}
\left\{
\begin{array}{l}
  D(A) = C^{2}_b(\R)\times C^{2}_b(\R), \\
  a=diag(d_u,d_v),\\
  A\mathbf w = -a\mathbf w'',\\
  \mathbf F: D(A) \rightarrow D(A), \mathbf F(\mathbf w)=(U_n(1-U_n),\lambda V_n(1-V_n-\gamma U_n)).
\end{array}
\right.
\end{equation}
For $X\in\{C_{ub}(\R),C^{2}_b(\R)\}$ and $(a,b)\in X\times X$ we define
$$\|(a,b)\|_{X\times X}=\|a\|_{X}+\|b\|_{X}.$$

We shall consider (\ref{Abstract-form}) as a nonlinear perturbation of the linear part that, in this case, consists of two uncoupled diffusion equations. Thus, the corresponding semigroup is the diagonal semigroup consisting of Gauss semigroups
\begin{equation}
\mathbf {S}(t) \mathbf w = diag (G_{d_u}(t)\star w_1, G_{d_v}(t)\star w_2), \quad t>0, \qquad \mathbf  S(0)\mathbf w =\mathbf  w,
\label{Gsem1}
\end{equation}
where for $d=d_u,d_v$  and $f\in X$
\begin{equation}\label{Gauss-semigroup}
\begin{array}{ccl}
  (S_d(t)f)(x) &=& [G_{d}\star f](x,t) = \displaystyle\frac{1}{\sqrt{4\pi d
t}}\displaystyle\int_{\R}\exp\left(-\displaystyle\frac{(x-y)^2}{{4d
t}}\right)f(y)dy,   \\
\end{array}
\end{equation}
where $$G_d(x,t)=\displaystyle\frac{1}{\sqrt{4\pi d
t}}\exp\left(-\displaystyle\frac{x^2}{{4d t}}\right).$$
Then, by e.g. \cite[Section 7.3.10]{Bob}, the family $\{\mathbf  S(t)\}_{t\geq0}$ is a $C_0-$semigroup of contractions (even analytic) on
$B,$ with the generator $(A,D(A))$. Furthermore, since $\mathbf  F$ is a quadratic function, it is continuously Fr\'{e}chet differentiable in $B$ and therefore (\ref{Abstract-form}) has a unique local in time (defined on $[0,t_{max})$) classical solution, provided $\mathbf w(0)\in D(A)$ (due to the analyticity, there is a local classical solution with $\mathbf w(0)\in B$ on $(0,t_{max})$, see e.g. \cite[Theorem 2.3.5]{Zheng2004}).

Our problem is posed on the whole line and thus comparison theorems for the solutions are a little more delicate. Though in various forms they appear in many papers, see e.g. \cite{term,Fife1979,Paobook} and references therein, and thus it seems that they belong to a mathematical folklore, a comprehensive proof of them, starting from the first principles, is difficult to find. Therefore we decided to provide a such a proof for the problem at hand that uses the positivity of the semigroup $\{S_d(t)\}_{t\geq 0}$ and the triangular  structure of the nonlinearity in (\ref{cooperation0}). In fact, the semigroup for  the scalar problem,
\begin{eqnarray}
\phi_t &=& d\phi_{xx} + c(x,t)\phi,\quad x \in \mathbb R,\quad t\geq 0,\nonumber\\
\phi(x,0)&=&\mathring{\phi}(x),
\label{C50}
\end{eqnarray}
where   $|c(x,t)|\leq L$ on $\mathbb R\times \mathbb R_+$, is positive.  Indeed,  the equation can be re-written as
\begin{eqnarray}
\Phi_t&= &d\Phi_{xx} +C(x,t) \Phi,\quad x \in \mathbb R, \quad t\geq 0,\nonumber\\
\Phi(x,0)&=& \mathring \phi(x),
\label{C5}
\end{eqnarray}
where $C(x,t)=c(x,t)+L\geq 0$ and $\Phi(x,t)=e^{Lt}\phi(x,t)$ and the positivity of the semigroup solving (\ref{C5}) follows from the Dyson-Phillips expansion \cite[Theorem III.1.10]{Engel2006}.
Then, considering two solutions $u_1$ and $u_2$ with $u_1(x,0)\leq u_2(x,0)$ to the scalar nonlinear problem
\begin{equation}
u_t = d u_{xx} + F(u,t),\quad x \in \mathbb R, \quad t>0,
\label{C1}
\end{equation}
on a common interval of existence $[0,t']$, where $F$ is a differentiable function on $\mathbb R\times \mathbb R_+$, we find that $z= u_2-u_1$ satisfies
\begin{equation}
z_t = d z_{xx} + c(x,t) z, \quad x \in \mathbb R,\quad 0\leq t\leq t'
\label{C8}
\end{equation}
where $c = F'((1-\theta)u_1 + \theta u_2), 0< \theta< 1,$ is bounded on $\mathbb R\times [0,t']$. By the above linear result, $u_2 -u_1 =  z\geq 0$. Returning now to (\ref{cooperation0}), we see that the first equation is the Fisher equation and functions identically equal to $0$ and to $1$ are its solutions defined globally in time. Thus for any $0\leq U(x,0)\leq 1$ we obtain $0\leq U(x,t)\leq 1$ on $[0,t_{max})$. Hence $U$ is defined globally in $t$ and satisfies  $0\leq U(x,t)\leq 1$ for all $(x,t)\in \mathbb R\times \mathbb R_+$. Now, let $V$ be the solution of the second equation in (\ref{cooperation0}) on the maximum interval of existence $[0,t_{max})$,
$$
V_t = d_v V_{xx} + V(1-U(x,t) -V),
$$
with $0\leq V(x,0)\leq 1$. Since the function identically equal to zero solves the above equation, as before we get $V(x,t)\geq 0$ on $[0,t_{max})$ as long as $V(x,0)\geq 0$. But then, using $V(1-U(x,t) -V) \leq V(1-V)$ on account of $U\geq 0$ we see, by e.g. Picard iterates, that $V$ is dominated by the solution of the Fisher equation with the same initial condition and so, in particular, by $1$. This gives the global in time existence of $V$ and the bound $0\leq V\leq 1$,  and hence global in time solvability of the system (\ref{cooperation0}) with initial conditions bounded by 0 and 1.

Since the updating conditions (\ref{mise-a-jour0}) are non-increasing, if the initial data $U_{1}(\cdot,0),
V_{1}(\cdot,0)$ satisfy
\begin{equation}\label{bornage2}
\|U_1(\cdot,0)\|_\infty\leq 1 \quad \mbox{and} \quad
\|V_1(\cdot,0)\|_\infty\leq 1,
\end{equation}
then for each $n\in
\mathbb{N}^*$,  the solutions $(U_n,V_n)$ of system
(\ref{Abstract-form}) satisfy
\begin{equation}\label{bornage1}
\|U_n\|_\infty\leq 1\quad \mbox{and}\quad
\|V_n\|_\infty\leq 1.
\end{equation}

\subsection{The existence of equlibria of (\ref{cooperation0})-(\ref{mise-a-jour0})}

\subsubsection{The first coordinate change}
System (\ref{cooperation0}) is monotone competitive and system
(\ref{mise-a-jour0}) is not monotone. Therefore, the full system
(\ref{cooperation0})-(\ref{mise-a-jour0}) is not monotone. Hence,
in order to be able to apply results of Li et al. \cite{Li2005}, we first proceed to a coordinates change in order to obtain a monotone
cooperative system. We set
\begin{equation}\label{new-variables1}
\left\{
    \begin{array}{ccl}
      u_n & = & U_n, \\
      v_n & = & 1-V_n
    \end{array}
\right.
\end{equation}
so that system (\ref{cooperation0})-(\ref{mise-a-jour0}) is transformed to

\begin{equation}\label{cooperation1}
 \left\{%
\begin{array}{lcl}
 \displaystyle\frac{\partial u_n}{\partial t} &=& u_n(1-u_n)+d_u\displaystyle\frac{\partial^2 u_n}{\partial x^2}, \quad 0\leq t\leq \tau,\quad x\in\mathbb{R},\\
    \displaystyle\frac{\partial v_n}{\partial t} &=&
  - \lambda v_n(1-v_n) + \lambda\gamma u_n(1-v_n)+d_v\displaystyle\frac{\partial^2 v_n}{\partial
  x^2},
\end{array}
\right.
\end{equation}
together with the updating conditions
 \begin{equation}\label{mise-a-jour1}
 \left\{%
\begin{array}{lcl}
u_{n+1}(x,0) &=& (1- w_v(v_{n}(x,\tau))\psi(u_n(x,\tau)))u_n(x,\tau),\\
    v_{n+1}(x,0) &=& (1- \eta)v_{n}(x,\tau)+\eta.
\end{array}
\right.
\end{equation}
Properties (\ref{bornage1}), (\ref{bornage2}) and
(\ref{omegaV-properties}) imply  that $w_v$ is a decreasing
$\mathcal{C}^1(\R)$ function such that
\begin{equation}\label{omegav-properties}
w_v(1)=0,\quad w_v'(1)=0,\quad 0\leq w_v<1.
\end{equation}
We also deduce that system (\ref{cooperation1}) is monotone
cooperative and the sequence defined in (\ref{mise-a-jour1}) is monotone increasing.  Hence system (\ref{cooperation1})-(\ref{mise-a-jour1}) is  monotone
 cooperative as long as the initial conditions belong to $[0,1]$.

\subsubsection{Space implicit model}
 Space homogeneous solutions of
system (\ref{cooperation1})-(\ref{mise-a-jour1}) satisfy

\begin{equation}\label{homogeneous-cooperation}
 \left\{%
\begin{array}{lcl}
 \displaystyle\frac{d u_{n+1}}{d t} &=& u_{n+1}(1-u_{n+1}), \quad 0\leq t\leq \tau,\quad n\in\mathbb{N},\\
    \displaystyle\frac{d v_{n+1}}{d t} &=&
  - \lambda v_{n+1}(1-v_{n+1})+ \lambda\gamma(1-v_{n+1})u_{n+1},
\end{array}
\right.
\end{equation}
together with the updating conditions
 \begin{equation}\label{homogemeous-mise-a-jour}
 \left\{%
\begin{array}{lcl}
u_{n+1}(0) &=& (1- w_v(v_{n}(\tau))\psi(u_n(\tau)))u_n(\tau),\\
v_{n+1}(0) &=& (1- \eta)v_{n}(\tau)+\eta.
\end{array}
\right.
\end{equation}
Solving the logistic equation (\ref{homogeneous-cooperation})$_1$, leads to

\begin{equation}\label{homogemeous-u}
 u_{n+1}(t) =
 \displaystyle\frac{u_{n+1}(0)}{u_{n+1}(0)+(1-u_{n+1}(0))e^{-t}},\quad 0\leq t\leq
 \tau.
\end{equation}
In addition, direct computations give
\begin{equation}\label{Iu}
\begin{array}{lcl}
\displaystyle\int_0^tu_{n+1}(s)ds &=& \displaystyle\int_0^t\frac{u_{n+1}(0)}{u_{n+1}(0)+(1-u_{n+1}(0))e^{-s}}ds\\
 &=&
 \displaystyle\int_0^t\frac{u_{n+1}(0)e^{s}}{u_{n+1}(0)e^{s}+(1-u_{n+1}(0))}ds\\
 &=&\ln \left(1+u_{n+1}(0)(e^{t}-1)\right)\\
 &=:&\ln I_u(t).
\end{array}
\end{equation}
Now, returning to  (\ref{homogeneous-cooperation})$_2$ and setting $z=1/(1-v_{n+1})$, we get
$$
  \dot{z}  =  \lambda(1-z+\gamma u_{n+1}z) =  \lambda-\lambda(1-\gamma u_{n+1})z.
$$
Using the integrating factor $e^{\lambda  \int_0^t(1-\gamma u_{n+1}(s))ds } = e^{\lambda t} [I_u(t)]^{-\lambda \gamma}$, we get
$$
z(t) = e^{-\lambda t} [I_u(t)]^{\lambda \gamma} \frac{1}{1-v_{n+1}(0)}  + \lambda e^{-\lambda t} [I_u(t)]^{\lambda \gamma} \int_0^t e^{\lambda s} [I_u(s)]^{-\lambda \gamma}ds
$$
so that
$$v_{n+1}(t)=1-\displaystyle\frac{(1-v_{n+1}(0))e^{\lambda t}[I_u(t)]^{-\lambda \gamma}}{1+\lambda (1-v_{n+1}(0))\displaystyle\int_0^te^{\lambda s}[I_u(s)]^{-\lambda \gamma}ds}.$$
Using the updating condition (\ref{homogemeous-mise-a-jour}) leads
to

\begin{equation}\label{homogeneous-recursion-o}
\left\{
    \begin{array}{lcl}
      u_{n+1}(\tau) & = & \displaystyle\frac{(1-w_v(v_n(\tau))\psi(u_n(\tau)))u_{n}(\tau)}{e^{-\tau}+(1-e^{-\tau})(1-w_v(v_n(\tau))\psi(u_n(\tau)))u_{n}(\tau)}=:\bar{F}_1(u_n(\tau),v_n(\tau)), \\
      &&\\
       v_{n+1}(\tau) &=& 1-\displaystyle\frac{(1-\eta)(1-v_{n}(\tau)))e^{\lambda \tau}[I_u(\tau)]^{-\lambda \gamma}}{1+(1-\eta)(1-v_{n}(\tau))\lambda\displaystyle\int_0^\tau e^{\lambda s}[I_u(s)]^{-\lambda \gamma}ds}=:\bar{F}_2(u_n(\tau),v_n(\tau)).
    \end{array}
    \right.
\end{equation}

Thus, the solution of system
(\ref{homogeneous-cooperation})-(\ref{homogemeous-mise-a-jour})
given by system (\ref{homogeneous-recursion-o}) generates a discrete
dynamical system. Space homogeneous equilibria of system
(\ref{cooperation1})-(\ref{mise-a-jour1}) are equilibria of model
(\ref{homogeneous-recursion-o}).

\subsubsection{Space homogeneous equilibria of system
(\ref{cooperation1})-(\ref{mise-a-jour1})} In this section we
compute the space homogeneous equilibria of system
(\ref{cooperation1})-(\ref{mise-a-jour1}) by solving the fixed point
problem associated to system (\ref{homogeneous-recursion-o}).
\par
$\bar{F}_1(u,v)=u$ implies $u=0$ or
\begin{equation}\label{equilibre-u}
1-w_v(v)\psi(u)=e^{-\tau}+(1-e^{-\tau})(1-w_v(v)\psi(u))u.
\end{equation}
Similarly, $\bar{F}_2(u,v)=v$ implies $v=1$ or
\begin{equation}\label{equilibre-v}
1+(1-\eta)(1-v)\lambda\displaystyle\int_0^\tau e^{\lambda s}[I_u(s)]^{-\lambda \gamma}ds=(1-\eta)e^{-\lambda \tau}[I_u(\tau)]^{\lambda \gamma}.
\end{equation}
We therefore deduce the first equilibrium $\mathbf E_0=(0,1)$. Substituting
$u=0$ (i.e. $I_0(t)=1$) in (\ref{equilibre-v}) implies
$$\bar{v}=\displaystyle\frac{\eta}{(1-\eta)(e^{\lambda\tau}-1)}>0.$$ Note that  $$\bar{v}<1\quad \mathrm{if\;and\;only\;if} \quad \mathcal{R}_0>1$$ where
$\mathcal{R}_0:=(1-\eta)e^{\lambda\tau}$. Substituting $v=1$ in
(\ref{equilibre-u}) implies $u=1$. Hence, we obtain the following
Lemma \ref{homogeneous-trivial-equilibria}.
\begin{lemma}\label{homogeneous-trivial-equilibria}
 System (\ref{cooperation1})-(\ref{mise-a-jour1}) admits as trivial
 and semi-trivial equilibria in the feasible region:
\begin{itemize}
    \item $\mathbf E_0=(0,1)$ and $\mathbf E_u=(1,1)$ that always exist;
    \item
    $\mathbf E_v=(0,\bar{v})=\left(0,\displaystyle\frac{\eta}{(1-\eta)(e^{\lambda\tau}-1)}\right)$ if and only if $$\mathcal{R}_0=(1-\eta)\exp(\lambda\tau)>1.$$
\end{itemize}
\end{lemma}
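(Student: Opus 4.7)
The plan is to solve the fixed-point problem $\bar F_1(u,v)=u$, $\bar F_2(u,v)=v$ associated with the discrete dynamical system (\ref{homogeneous-recursion-o}) by a four-case analysis based on the factorizations (\ref{equilibre-u}) and (\ref{equilibre-v}). As the preceding paragraph already records, $\bar F_1(u,v)=u$ forces \emph{either} $u=0$ \emph{or} the nontrivial identity (\ref{equilibre-u}), and likewise $\bar F_2(u,v)=v$ forces \emph{either} $v=1$ \emph{or} (\ref{equilibre-v}). This gives four combinations to treat in turn.

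First, the pair $(u,v)=(0,1)$ trivially satisfies both equations and yields $\mathbf E_0$. Next, I would pair $u=0$ with (\ref{equilibre-v}). Substituting the constant solution $u\equiv 0$ into the definition (\ref{Iu}) of $I_u$ gives $I_u(s)\equiv 1$, so (\ref{equilibre-v}) collapses to a single linear equation in $v$ whose unique solution is
\[
\bar v=\frac{\eta}{(1-\eta)(e^{\lambda\tau}-1)}.
\]
For $\mathbf E_v=(0,\bar v)$ to lie in the feasible region $[0,1]^2$ and to be distinct from $\mathbf E_0$, I must verify $0<\bar v<1$. Positivity is immediate since $\eta\in(0,1)$ and $e^{\lambda\tau}>1$. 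The upper bound $\bar v<1$ rearranges algebraically to $(1-\eta)e^{\lambda\tau}>1$, i.e.\ $\mathcal R_0>1$, which produces the stated threshold condition; note also that at the critical value $\mathcal R_0=1$ one has $\bar v=1$ and $\mathbf E_v$ merges with $\mathbf E_0$.

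Third, I would pair (\ref{equilibre-u}) with $v=1$. Invoking property (\ref{omegav-properties}), namely $w_v(1)=0$, equation (\ref{equilibre-u}) reduces to the affine identity $1=e^{-\tau}+(1-e^{-\tau})u$, whose unique solution is $u=1$, giving $\mathbf E_u=(1,1)$. The remaining case, in which both (\ref{equilibre-u}) and (\ref{equilibre-v}) hold simultaneously with $u\in(0,1)$ and $v\in[0,1)$, would correspond to a coexistence equilibrium; since the present lemma concerns only trivial and semi-trivial equilibria, this case need not be resolved here.

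I do not anticipate a substantive obstacle, as the analytic preparation---the explicit integration of the logistic equation for $u_{n+1}$ and the linear equation for $z=1/(1-v_{n+1})$---has already been carried out before the statement. The only point that requires a moment of care is the equivalence between the feasibility condition $\bar v<1$ and the threshold $\mathcal R_0>1$, which is nevertheless a one-line algebraic rearrangement.
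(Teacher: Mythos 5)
Your proposal is correct and follows essentially the same route as the paper: factorizing the fixed-point equations of (\ref{homogeneous-recursion-o}) into $u=0$ or (\ref{equilibre-u}) and $v=1$ or (\ref{equilibre-v}), then substituting $u=0$ (so $I_u\equiv 1$) to obtain $\bar v$ and $v=1$ (using $w_v(1)=0$) to obtain $u=1$, with the feasibility condition $\bar v<1$ rearranging to $\mathcal R_0>1$. Your extra observations (the merging of $\mathbf E_v$ with $\mathbf E_0$ at $\mathcal R_0=1$ and the explicit setting aside of the coexistence case) are consistent with, and slightly more complete than, the paper's argument.
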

In this study, we are mainly concerned with existence of traveling wave solutions of system (\ref{cooperation1})-(\ref{mise-a-jour1})
involving equilibria $\mathbf E_u$ and $\mathbf E_v$ computed in Lemma
\ref{homogeneous-trivial-equilibria}. In order to use the results of
Li et al.  \cite{Li2005}, we translate the equilibrium $\mathbf E_v$
to \textbf{0} through another coordinates change.

\subsection{The existence of travelling waves}
\subsubsection{The second coordinate change}

Recall that
$$\bar{v}=\displaystyle\frac{\eta}{(1-\eta)(e^{\lambda\tau}-1)}$$
and $$\mathcal{R}_0=(1-\eta)e^{\lambda\tau}.$$ Recall also that
$0\leq \bar{v}<1$ is equivalent to $\mathcal{R}_0>1.$ Therefore, in
this section we assume that
$$\mathcal{R}_0>1.$$ We set
\begin{equation}\label{new-variables2}
\left\{
    \begin{array}{ccl}
      u_n & = & u_n, \\
      q_n & = & v_n-\bar{v}.
    \end{array}
\right.
\end{equation}
Hence, system (\ref{cooperation1})-(\ref{mise-a-jour1}) becomes
\begin{equation}\label{cooperation}
 \left\{%
\begin{array}{lcl}
 \displaystyle\frac{\partial u_n}{\partial t} &=& u_n(1-u_n)+d_u\displaystyle\frac{\partial^2 u_n}{\partial x^2}, \quad 0\leq t\leq \tau,\quad x\in\mathbb{R},\\
    \displaystyle\frac{\partial q_n}{\partial t} &=&
  - \lambda (q_n+\bar{v})(1-q_n-\bar{v}) + \lambda\gamma u_n(1-q_n-\bar{v})+d_v\displaystyle\frac{\partial^2 v_n}{\partial
  x^2},
\end{array}
\right.
\end{equation}
together with the updating conditions
 \begin{equation}\label{mise-a-jour}
 \left\{%
\begin{array}{lcl}
u_{n+1}(x,0) &=& (1- w(q_{n}(x,\tau))\psi(u_n(x,\tau)))u_n(x,\tau)=:H_1(u_n(x,\tau), q_n(x,\tau)),\\
    q_{n+1}(x,0) &=& (1- \eta)q_{n}(x,\tau)+\eta(1-\bar{v})=:H_2(u_n(x,\tau),
    q_n(x,\tau)).
\end{array}
\right.
\end{equation}

As previously, we deduce from properties (\ref{omegav-properties})
that $w$ is a decreasing $\mathcal{C}^1(\R)$ function such
that
\begin{equation}\label{omegaq-properties}
w(0)>0,\quad w(1-\bar{v})=0,\quad w'(1-\bar{v})=0,\quad 0\leq
w(q)<1.
\end{equation}

For simplicity, we set $(u_{n,0},q_{n,0})=(u_{n}(x,0),q_{n}(x,0))$.
We also set $\mathbf P_{n,0}=(u_{n,0},q_{n,0})$ and let $Q_{\tau}$ denote the
time-$\tau$-map operator solution of system (\ref{cooperation}). Then
\begin{equation}\label{Q-tau-operator}
\mathbf P_{n+1,0}=\mathbf H(Q_{\tau}[\mathbf P_{n,0}])=:Q[\mathbf P_{n,0}]
\end{equation}
where $\mathbf H(u,q)=(H_1(u,q), H_2(u,q))$.

 Using the coordinates change (\ref{new-variables2}),  system
(\ref{homogeneous-recursion-o}) becomes
\begin{equation}\label{homogeneous-recursion}
\left\{
    \begin{array}{lcl}
      u_{n+1}(\tau) & = & \displaystyle\frac{(1-w(q_n(\tau))\psi(u_n(\tau)))u_{n}(\tau)}{e^{-\tau}+(1-e^{-\tau})(1-w(q_n(\tau))\psi(u_n(\tau)))u_{n}(\tau)}=:F_1(u_{n}(\tau),q_{n}(\tau)), \\
      &&\\
       q_{n+1}(\tau) &=& 1-\bar{v}-\displaystyle\frac{(1-\eta)(1-\bar{v}-q_{n}(\tau))e^{\lambda \tau}[I_u(\tau)]^{-\lambda \gamma}}{1+\lambda(1-\eta)(1-\bar{v}-q_{n}(\tau))\displaystyle\int_0^\tau e^{\lambda s}[I_u(s)]^{-\lambda \gamma}ds}=:F_2(u_{n}(\tau),v_{n}(\tau))
    \end{array}
    \right.
\end{equation}
and, using  Lemma \ref{homogeneous-trivial-equilibria},
it is straightforward to deduce that 
 system (\ref{cooperation})-(\ref{mise-a-jour}) admits as space homogeneous equilibria:
 $$\mathbf e_0=(0,1-\bar{v}),\quad \mathbf e_u=(1,1-\bar{v})\quad \mbox{and}\quad \mathbf  e_v=(0,0).$$

\subsubsection{Stability analysis of space homogeneous equilibria of system
(\ref{cooperation})-(\ref{mise-a-jour})} We first focus on the
integral term that appears in $F_2$ (see equation
(\ref{homogeneous-recursion})$_2$). Recalling (\ref{Iu}), we consider
$$Y(u,q):=\int_0^\tau e^{\lambda
s}\left(1+u(e^s-1)(1-w(q)\psi(u))\right)^{-\lambda \gamma}ds$$
 and an auxiliary function $Z: (u,q,s)\in\mathbb{R}^+\times\mathbb{R}^+\times[0,
\tau]\rightarrow\mathbb{R}$ defined by $$Z(u,q,s)=e^{\lambda
s}\left(1+u(e^s-1)(1-w(q)\psi(u))\right)^{-\lambda \gamma}.$$ For
every $u,q\in\mathbb{R}^+$, the function $s\mapsto Z(u,q,s)$ is
continuous on the interval $[0,\tau]$.
 In addition,
$$\displaystyle\frac{\partial Z}{\partial u}=-\lambda\gamma e^{\lambda s}(e^s-1)\left(1-w(q)(\psi(u)-u\psi'(u))\right)
\left(1+u(e^s-1)(1-w(q)\psi(u))\right)^{-\lambda\gamma-1}$$
exists and is continuous for all $(u,q,s)\in\mathbb{R}^+\times\mathbb{R}^+\times[0,
\tau]$. Consequently,
$\displaystyle\frac{\partial Y}{\partial u}=\displaystyle\int_0^\tau\frac{\partial Z}{\partial u}ds$. Similarly, $\displaystyle\frac{\partial Y}{\partial q}=\displaystyle\int_0^\tau\frac{\partial
Z}{\partial q}ds$. For convenience, we set (see (\ref{homogeneous-recursion}))
$$F_1(u,q)=\displaystyle\frac{A_1(u,q)}{A_2(u,q)} \quad\mbox{and} \quad F_2(u,q)=1-\bar{v}+\displaystyle\frac{B_1(u,q)}{B_2(u,q)}.$$
Computing the partial derivatives of $A_1, A_2,B_1$ and $B_2$
defined in (\ref{homogeneous-recursion}) gives
\begin{equation}\label{jacobien-1}
\begin{array}{lcl}
  \displaystyle\frac{\partial A_1}{\partial u} & = & 1-w(q)(\psi(u)+u\psi'(u)), \\
  \displaystyle\frac{\partial A_1}{\partial q} & = & -w'(q)\psi(u)u,\\
 \displaystyle \frac{\partial A_2}{\partial u} & = & (1-e^{-\tau})\displaystyle \frac{A_1}{\partial u},\\
 \displaystyle \frac{\partial A_2}{\partial q} & = & (1-e^{-\tau})\displaystyle \frac{A_1}{\partial q},\\
 \displaystyle\frac{\partial B_1}{\partial u} & = & \gamma\lambda(1-\eta)(1-q-\bar{v})e^{\lambda\tau}(1+(e^\tau-1)u(1-w(q)\psi(u)))^{-\gamma\lambda-1}
 (e^\tau-1)(1-w(q)(\psi(u)+u\psi'(u))),\\
 \displaystyle\frac{\partial B_1}{\partial q} & = & (1-\eta)e^{\lambda\tau}(1+(e^\tau-1)u(1-w(q)\psi(u)))^{-\gamma\lambda}
 \\
 &-&\gamma\lambda(1-\eta)(1-q-\bar{v})e^{\lambda\tau}(e^\tau-1)uw'(q)\psi(u)(1+(e^\tau-1)u(1-w(q)\psi(u)))^{-\gamma\lambda-1},\\
  \displaystyle\frac{\partial B_2}{\partial u} & = &-\gamma\lambda^2 (1-\eta)(1-q-\bar{v})\\
  &\times&\displaystyle\int_0^\tau e^{\lambda s}(e^s-1)(1-w(q)(\psi(u)+u\psi'(u))(1+(e^s-1)u(1-w(q)\psi(u)))^{-\gamma\lambda-1}ds,\\
 \displaystyle\frac{\partial B_2}{\partial q} & = &-(1-\eta)\lambda\displaystyle\int_0^\tau e^{\lambda s}(1+(e^s-1)u(1-w(q)\psi(u)))^{-\gamma\lambda}ds\\
  &+&(1-\eta)(1-q-\bar{v})\gamma\lambda^2\displaystyle\int_0^\tau e^{\lambda s}(e^s-1)uw'(q)\psi(u) (1+(e^s-1)u(1-w(q)\psi(u)))^{-\gamma\lambda-1}ds.\\
 \end{array}
\end{equation}
Let  $\mathcal J= \{J_{ij}\}_{1\leq i,j\leq 2}$ denote the Jacobian matrix of (\ref{homogeneous-recursion}).  Using (\ref{jacobien-1}), the quotient rule and the properties of $w$ (see (\ref{omegaq-properties})) and  $\psi$ (see
(\ref{psi-properties})), we obtain the
following results:
\begin{itemize}
    \item Local stability of $\mathbf e_0$.  At the
    trivial equilibrium $\mathbf e_0=(0,1-\bar{v})$ the matrix $\mathcal J$ has the following
    entries:
    $$
    \begin{array}{ccl}
      J_{11} & = & e^\tau, \\
      J_{12} & = & 0, \\
      J_{21} & = & 0, \\
      J_{22} & = & (1-\eta)e^{\lambda\tau}. \\
    \end{array}
    $$ Since $e^\tau$ is an eigenvalue of $\mathcal J$  at $\mathbf e_0$ and $e^\tau>1,$ the equilibrium $\mathbf e_0$ is unstable.
    \item Local stability of $\mathbf e_u$. At the
    semi-trivial equilibrium $\mathbf e_u=(1,1-\bar{v})$  the matrix $\mathcal J$ has the following entries:
    $$
    \begin{array}{ccl}
      J_{11} & = & e^{-\tau}, \\
      J_{12} & = & 0,\\
      J_{21} & = & 0, \\
      J_{22} & = & (1-\eta)e^{\lambda\tau(1-\gamma)}=:\mathcal{R}_1. \\
    \end{array}$$
Eigenvalues of the Jacobian matrix at $\mathbf e_u$ are
$e^{-\tau}$ and $\mathcal{R}_1$ with $e^{-\tau}<1$.
    Therefore, $\mathbf e_u$ is locally asymptotically stable (LAS) whenever
    $\mathcal{R}_1<1$.
    \item Local stability of $\mathbf e_v$. $\mathcal J$  at the
    semi-trivial equilibrium $\mathbf e_v=(0,0)$ has the following entries:
  $$  \begin{array}{ccl}
      J_{11} & = & \left(1-w(0)\psi(0)\right)e^\tau, \\
      J_{12} & = & 0, \\
      J_{22} & = & \displaystyle\frac{1}{(\mathcal{R}_0)^2}.\\
    \end{array}$$
    Knowing the explicit value of $J_{21}$ is not necessary since $J_{12}=0$.
Eigenvalues of the Jacobian matrix at $\mathbf e_v$ are $J_{11}$ and
$J_{22}$. Recall that we assumed $\mathcal{R}_0>1$. Hence,
$J_{22}<1$. Therefore, equilibrium $\mathbf e_v$ is LAS whenever
$$\mathcal{R}_2=\left(1-w(0)\psi(0)\right)e^\tau<1.$$
\end{itemize}
Hence, the following lemma  holds true.
\begin{lemma}\label{homogeneous-stability}The space homogeneous equilibria
of system (\ref{cooperation})-(\ref{mise-a-jour}) have the
following stability properties.
\begin{enumerate}
    \item The equilibrium $\mathbf e_0=(0,1-\bar{v})$ is unstable.
    \item The  equilibrium $\mathbf e_u=(1,1-\bar{v})$ is LAS whenever $\mathcal{R}_1<1$.
    \item The
     equilibrium
    $\mathbf e_v=(0,0)$ is LAS whenever $\mathcal{R}_2<1$.
\end{enumerate}
\end{lemma}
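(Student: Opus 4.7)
The plan is to study the three equilibria identified in Lemma~\ref{homogeneous-trivial-equilibria} (and translated via (\ref{new-variables2})) as fixed points of the smooth discrete map $(F_1,F_2)\colon\R^2\to\R^2$ given by (\ref{homogeneous-recursion}), whose iterates describe the space-homogeneous dynamics of (\ref{cooperation})--(\ref{mise-a-jour}). By the standard linearisation principle for discrete dynamical systems, such a fixed point is locally asymptotically stable when the spectral radius of the Jacobian is strictly less than one, and unstable when some eigenvalue has modulus strictly greater than one. I would therefore compute the Jacobian $\mathcal J$ from (\ref{jacobien-1}) at each of $\mathbf e_0$, $\mathbf e_u$ and $\mathbf e_v$ and compare its eigenvalues with the unit circle.

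The observation that trivialises the spectral problem is that the entry $J_{12}=\partial F_1/\partial q$ of $\mathcal J$ vanishes at each of the three equilibria. Indeed, by the quotient rule applied to $F_1=A_1/A_2$, $J_{12}$ is a multiple of $\partial A_1/\partial q=-u\,w'(q)\psi(u)$ from (\ref{jacobien-1}), so it vanishes whenever $u=0$ or $w'(q)=0$. By (\ref{omegaq-properties}) the latter holds at $q=1-\bar v$, so $J_{12}=0$ at $\mathbf e_0=(0,1-\bar v)$ (both factors vanish), at $\mathbf e_u=(1,1-\bar v)$ (the $w'$ factor vanishes), and at $\mathbf e_v=(0,0)$ (the $u$ factor vanishes). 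Hence $\mathcal J$ is lower triangular at each equilibrium, its eigenvalues are simply the diagonal entries $J_{11}$ and $J_{22}$, and the explicit value of $J_{21}$ is never needed.

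It then remains to evaluate these diagonal entries case by case. At $\mathbf e_0$, substituting $u=0$ (so $I_u\equiv 1$) into the quotient-rule formula for $\partial F_1/\partial u$ yields $J_{11}=e^\tau>1$, which already forces instability. At $\mathbf e_u$, the fixed-point identity (\ref{equilibre-u}) reduces $J_{11}$ to $e^{-\tau}<1$; setting $u=1$ gives $I_u(s)=e^s$, which collapses the integral in $B_2$, and using (\ref{equilibre-v}) to simplify $B_1/B_2$ produces $J_{22}=(1-\eta)e^{\lambda\tau(1-\gamma)}=\mathcal R_1$, so asymptotic stability is equivalent to $\mathcal R_1<1$. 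At $\mathbf e_v$, $u=0$ gives $J_{11}=(1-w(0)\psi(0))e^\tau=\mathcal R_2$, and the same kind of reduction of the integrals in $B_1,B_2$ with $I_u\equiv 1$, combined with (\ref{equilibre-v}), produces an explicit expression for $J_{22}$ in terms of $\mathcal R_0$ that is strictly less than one under the standing hypothesis $\mathcal R_0>1$; hence stability reduces to $\mathcal R_2<1$.

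The only real technical nuisance, and the step where care is needed, is the closed-form evaluation of $J_{22}$ at $\mathbf e_u$ and $\mathbf e_v$: the expressions in (\ref{jacobien-1}) contain integrals of $e^{\lambda s}[I_u(s)]^{-\lambda\gamma}$ which must be simplified with the explicit forms $I_u\equiv 1$ or $I_u(s)=e^s$ at the respective equilibria, and then combined with the fixed-point relation (\ref{equilibre-v}) that ties those integrals back to $\bar v$. Once these algebraic reductions are performed, the three assertions of Lemma~\ref{homogeneous-stability} follow immediately by comparing the two diagonal eigenvalues with $1$.
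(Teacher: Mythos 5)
Your proposal is correct and follows essentially the same route as the paper: linearise the discrete map (\ref{homogeneous-recursion}) at each fixed point, observe that $J_{12}=0$ there (the paper notes this explicitly, e.g.\ ``knowing the explicit value of $J_{21}$ is not necessary since $J_{12}=0$'' at $\mathbf e_v$), so the eigenvalues are the diagonal entries $J_{11}$ and $J_{22}$, which evaluate to $e^\tau$ and $(1-\eta)e^{\lambda\tau}$ at $\mathbf e_0$, to $e^{-\tau}$ and $\mathcal R_1$ at $\mathbf e_u$, and to $\mathcal R_2$ and $1/\mathcal R_0^2$ at $\mathbf e_v$. Your explicit explanation of why $J_{12}$ vanishes (via $\partial A_1/\partial q=-u\,w'(q)\psi(u)$ together with $w'(1-\bar v)=0$) is a slightly more transparent presentation of the same computation the paper performs.
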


\subsubsection{Application of the results of \cite{Li2005}}\label{ss}
In the sequel, we study the recursion operator $Q$ defined in
equation (\ref{Q-tau-operator}) and
we check if it satisfies \textbf{Hypotheses 2.1} of Li et
al.  \cite{Li2005}. Recall that $\mathbf P_{n,0}=(u_{n,0},v_{n,0})$
and $Q_{\tau}$ is the time-$\tau$-map solution operator  of
reaction-diffusion system (\ref{cooperation}).
 We consider the order interval $\mathcal{C}_{\mathbf e_{u}}=[\mathbf e_v, \mathbf e_{u}]$, where $\mathbf e_v=\textbf{0}$
and $\mathbf e_{u}=(1,1-\bar{v})$ is the positive coexistence equilibrium
defined in Lemma \ref{homogeneous-stability}.

\begin{lemma}(Some properties of $Q_\tau$)\label{Q-tau-properties}
\begin{enumerate}
    \item The operator $Q_\tau$ is order preserving in the sense that
    if $\textbf{u}$ and $\textbf{v}$ are any two functions in $\mathcal{C}_{\mathbf e_{u}}$ with
    $\textbf{v}\geq \textbf{u}$, then $Q_\tau[\textbf{v}]\geq Q_\tau[\textbf{u}]$.
    \item $Q_\tau$ is translation invariant.
    \item For any $\textbf{v}, \textbf{u}\in \mathcal{C}_{\mathbf e_u}$ and fixed $x$, $|Q_\tau[\textbf{v}](x)-Q_\tau[\textbf{u}](x)|$
    is arbitrarily small, provided
    $|\textbf{v}(y)-\textbf{u}(y)|$ is sufficiently small on a
    sufficiently long interval centered at $x$.
    \item Every sequence $\textbf{v}_n(x)$ in
    $\mathcal{C}_{\mathbf e_{u}}$ has a subsequence
    $\textbf{v}_{n_l}$ such that $Q_\tau[\textbf{v}_{n_l}]$ converges
    uniformly on every bounded set.
\end{enumerate}
\end{lemma}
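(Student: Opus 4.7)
The plan is to derive all four items from the mild formulation of system (\ref{cooperation}) with the Gauss semigroup $\mathbf S(t)$ from (\ref{Gsem1}): writing $\mathbf P := (u_n, q_n)$, one has
\[
\mathbf P(\cdot,t) \;=\; \mathbf S(t)\mathbf P_{0} + \int_0^t \mathbf S(t-s)\,\mathbf F(\mathbf P(\cdot,s))\,ds,
\]
where $\mathbf F$ denotes the reaction term of (\ref{cooperation}) rewritten in the shifted variables. The Gauss kernel is positive, translation invariant, and smoothing, and these three features are precisely what will drive items (1)--(4) respectively.

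For (1), I would first note that on the order interval $\mathcal{C}_{\mathbf e_u}$ the reaction is cooperative: the only non-trivial off-diagonal derivative, $\lambda\gamma(1-q-\bar v)$, is nonnegative whenever $0\le q\le 1-\bar v$, and the $u$-equation does not depend on $q$. The scalar comparison already proved in the excerpt for (\ref{C8}) therefore applies first to $u$, giving $u^{(1)}\le u^{(2)}$ from $u_0^{(1)}\le u_0^{(2)}$. With this monotonicity of $u$ in hand, the difference $z:=q^{(2)}-q^{(1)}$ satisfies a scalar parabolic equation $z_t = d_v z_{xx} + c(x,t)z + r(x,t)$ with $c$ bounded and forcing $r = \lambda\gamma(1-q^{(1)}-\bar v)(u^{(2)}-u^{(1)})\ge 0$; applying to this equation the very Dyson--Phillips positivity argument carried out in (\ref{C5}) yields $z\ge 0$. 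Property (2) is immediate from uniqueness of classical solutions: all coefficients in (\ref{cooperation}) are $x$-independent, so $\mathbf P(x-y,t)$ solves the Cauchy problem started from $T_y\mathbf P_{0}$, and hence $Q_\tau T_y = T_y Q_\tau$.

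For (3), fix $x\in\mathbb R$ and $\epsilon>0$. Because $\mathcal{C}_{\mathbf e_u}$ is bounded and invariant, $\mathbf F$ is globally Lipschitz on it with some constant $L$. Using the mild formula and the explicit kernel (\ref{Gauss-semigroup}), I would split the convolution at $|x-y|\ge R$ versus $|x-y|<R$; for $R$ large enough the tail contributes less than a prescribed fraction of $\epsilon$ uniformly for inputs in $\mathcal{C}_{\mathbf e_u}$, while the central part is bounded by $\sup_{|x-y|<R}|\mathbf v(y)-\mathbf u(y)|$. A Gronwall inequality applied on $[0,\tau]$ to the local modulus $t\mapsto \sup_{|x'-x|\le W}|\mathbf P^{(2)}(x',t)-\mathbf P^{(1)}(x',t)|$, with a spatial window $W=W(\tau,R)$ chosen large enough to absorb the Duhamel convolution over the interval $[0,\tau]$, then closes the estimate. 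For (4), the smoothing action of $\mathbf S(t)$ gives $\|\partial_x S_d(t) v\|_\infty = \mathcal O(t^{-1/2})\|v\|_\infty$, and substituting this bound into the mild formula yields a uniform bound on $\|\partial_x Q_\tau[\mathbf v]\|_\infty$ for $\mathbf v\in\mathcal{C}_{\mathbf e_u}$; the family $\{Q_\tau[\mathbf v_n]\}$ is therefore uniformly bounded and equicontinuous on $\mathbb R$, so Arzel\`a--Ascoli together with a diagonal extraction over a compact exhaustion of $\mathbb R$ delivers the required subsequence.

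The principal obstacle is item (3): the problem is posed on all of $\mathbb R$, and the Duhamel convolution couples the value of $\mathbf v-\mathbf u$ at $x$ to its values arbitrarily far away, whereas the assumption only controls the difference on a finite window around $x$. The key technical step is the careful choice of the window size $R(\epsilon)$ and of the localised Gronwall argument, which must prevent the large-scale tails from feeding back into the local estimate through the nonlinear term on $[0,\tau]$.
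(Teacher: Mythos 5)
Your items (1), (2) and (4) follow the paper's own route essentially verbatim: the triangular structure (Fisher equation for $u$ decoupled from $q$, off-diagonal derivative $\lambda\gamma(1-q-\bar v)\ge 0$ on the order interval) reduces order preservation to two scalar comparison principles proved via the Dyson--Phillips positivity of the perturbed heat semigroup; translation invariance is uniqueness plus $x$-independence of the coefficients; and compactness comes from the $O(t^{-1/2})$ bound on $\partial_x G_d$ fed through the Duhamel formula, giving a uniform spatial-derivative bound and hence equicontinuity for Arzel\`a--Ascoli. All of that is sound and matches the paper.

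The genuine divergence is item (3), and it is also the one place where your argument, as stated, does not quite close. The paper does \emph{not} run a Gronwall inequality on the nonlinear mild formulation. Instead it linearizes the difference exactly: by the mean value theorem, $z=u^{(2)}-u^{(1)}$ satisfies the \emph{homogeneous} linear equation $z_t=d_uz_{xx}+c(x,t)z$ with $c$ bounded by $L$, so positivity gives the one-shot bound $|z(x,t)|\le e^{Lt}\bigl[S_{d_u}(t)|z_0|\bigr](x)$ -- a single Gaussian convolution of the initial difference, which is then split into a central window and a tail. The second component's difference satisfies a linear equation whose only forcing is $-p(u^{(2)}-u^{(1)})$, already controlled by the first step, so again no self-referential estimate is needed. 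Your localized Gronwall on $t\mapsto\sup_{|x'-x|\le W}|\mathbf P^{(2)}-\mathbf P^{(1)}|$ cannot be closed with a single fixed window $W$ ``large enough to absorb the Duhamel convolution'': the heat kernel couples the window $W$ at time $t$ to the whole line at every earlier time, so the right-hand side involves the modulus on a strictly larger window. You must either iterate Picard-style with windows growing by $R$ at each step (the remainder after $k$ steps being $O((L\tau)^k/k!)$ times the a priori global bound, which is finite since both solutions live in $\mathcal{C}_{\mathbf e_u}$), or work in an exponentially weighted sup norm. You flag this obstacle honestly, but the fix is not supplied; adopting the paper's linearization of the difference removes the problem entirely and is the cleaner route.
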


\begin{proof}
\begin{enumerate}
    \item The reaction-diffusion system (\ref{cooperation}) is a cooperative
system. Hence,  following the analysis at the beginning of this section,  we deduce that the time-$\tau$-map solution
operator  of system (\ref{cooperation}) is order preserving.
    \item Let $\mathbf u$ be the solution of system (\ref{cooperation}) initiated at
$\mathbf u_0$. 
 For $y\in \R$, we set
$\mathbf v=T_y[\mathbf u]$. In particular $\mathbf v_0=T_y[\mathbf u_0]$. We have
$\mathbf v_t=(T_y[\mathbf u])_t=T_y[\mathbf u_t]$, $A\mathbf v=AT_y[\mathbf u]=T_y[A\mathbf u]$ and
$\mathbf F(\mathbf v)=\mathbf F(T_y[\mathbf u])=T_y[F(\mathbf u)]$ since $\mathbf F$ does not explicitly depend on
$x\in \R$. Therefore, $\mathbf v_t+A\mathbf v-\mathbf F(\mathbf v)=T_y[\mathbf u_t+A\mathbf u-\mathbf F(\mathbf u)]=0$ and, by the uniqueness of solutions, we have
$$
 T_y[Q_\tau[\mathbf u_0]](x)  =  T_y[\mathbf u](x)  =  \mathbf v(x)  =  Q_\tau[T_y[\mathbf u_0]](x).
$$
Hence the time-$\tau$-map solution operator  of system
(\ref{cooperation}), $Q_\tau$, is translation invariant.

To prove 3. and 4. we write the solution, see e.g. \cite[page 95]{Britton86}, as
 \begin{equation}\label{solution-convolution}
 \left\{
\begin{array}{l}
u=G_{d_u}\star u_0+G_{d_u}\star\star f_{d_u}(u,q),\\
q=G_{d_v}\star q_0+G_{d_v}\star\star f_{d_v}(u,q),\\
\end{array}
\right.
\end{equation}
where
$$
 f_{d_u}(u,q)=u(1-u), \qquad   f_{d_v}(u,q)=-\lambda (q+\bar{v})(1-q-\bar{v})+\lambda\gamma u(1-q-\bar{v}),
$$
$G_d$, $d=d_u,d_v$, as well as the convolution $\star$, were defined in (\ref{Gauss-semigroup})
 and the spatio-temporal convolution is given by
$$
G_d\star \star f_d =\displaystyle\int_{0}^t\frac{1}{\sqrt{4\pi d
(t-s)}}\int_{\R}\exp\left(-\displaystyle\frac{(x-y)^2}{4d
(t-s)}\right)f_d(u(y,s),q(y,s))dyds.
$$
    \item Let $(u,q)$ and $(v,p)$ be two solutions of system (\ref{cooperation}) initiated at
$(u_0,q_0)$ and $(v_0,p_0)$ respectively. We assume that $(u_0,q_0)$, $(v_0,p_0)\in
\mathcal{C}_{\mathbf e_{u}}$, hence $(u,q)$ and $(v,p)$ are also uniformly bounded. Let $\{S_d^c(t)\}_{t\geq 0}$ denotes the (positive) semigroup solving (\ref{C50}) for some function $c$ satisfying $|c(x,t)|\leq L$. Using $|S_d(t)u_0|\leq S_d(t)|u_0|, u_0\in C_{ub}(\R),$ where $\{S_d(t)\}_{t\geq 0}$ is the diffusion semigroup (\ref{Gauss-semigroup}), and the Phillips-Dyson expansion to (\ref{C50}) we ascertain that for any $u_0$
$$
|S^c_d(t)u_0| \leq S^L_d(t)|u_0| = e^{Lt}S_d(t)|u_0|.
$$
As before, we begin with solutions $u$ and $v$ to (\ref{cooperation})$_1$. Repeating the argument leading to (\ref{C8}), we see that $z(x,t) = u(x,t)-v(x,t)$ can be estimated as
 $$|z(x,t)| \leq e^{Lt}[S_{d_u}(t)|u_0-v_0|](x) + \displaystyle\frac{e^{Lt}}{\sqrt{4\pi d_u
t}}\displaystyle\int_{\R}\exp\left(-\displaystyle\frac{(x-y)^2}{{4d_u
t}}\right)|u_0(y)-v_0(y)| dy.
$$
 Let, for $\epsilon>0$, $r>0$ be such that
 $$
 \left(\int_{-\infty}^\frac{-r}{2\sqrt{d_u\tau}} + \int^{\infty}_\frac{r}{2\sqrt{d_u\tau}}\right) e^{-z^2}dz \leq \frac{\epsilon \sqrt \pi}{4e^{L\tau}}.
 $$
 Then let us fix $x$ and let $|u_0(x)-v_0(x)| \leq  \delta \leq \epsilon/2e^{L\tau} $ on $(x-r,x+r)$ so that we obtain for $0<t\leq \tau$
 \begin{eqnarray*}
 |z(x,t)| &\leq & \displaystyle\frac{e^{Lt}}{\sqrt{4\pi d_u
t}}\displaystyle\int_{x-r}^{x+r}\exp\left(-\displaystyle\frac{(x-y)^2}{{4d_u
t}}\right)|u_0(y)-v_0(y)| dy \\
&&+ \displaystyle\frac{e^{Lt}}{\sqrt{4\pi d_u
t}}\displaystyle\left(\int_{-\infty}^{x-r}+\int_{x+r}^\infty\right)\exp\left(-\displaystyle\frac{(x-y)^2}{{4d_u
t}}\right)|u_0(y)-v_0(y)| dy \\
&\leq& e^{Lt} \delta +  \displaystyle\frac{2 e^{Lt}}{\sqrt\pi }\displaystyle\left(\int_{-\infty}^\frac{-r}{2\sqrt{d_ut}}+\int_\frac{r}{2\sqrt{d_ut}}^\infty\right)e^{-z^2} dz\leq e^{L\tau} \delta+ \displaystyle\frac{2 e^{L\tau}}{\sqrt\pi }\displaystyle\left(\int_{-\infty}^\frac{-r}{2\sqrt{d_u\tau}}+\int_\frac{r}{2\sqrt{d_u\tau}}^\infty\right)e^{-z^2} dz\leq \epsilon.\label{jbest1}
 \end{eqnarray*}
By choosing appropriate $r$ we see that the estimate is valid for $x$ in any given bounded subset of $\R$.

Considering now (\ref{cooperation})$_2$, we see that $Z(x,t) = q(x,t)-p(x,t)$ is a solution to
 \begin{eqnarray}
 Z_t &=& d_v Z_{xx} + (q-qu-q^2-p+pv + p^2) = d_v Z_{xx} + Z(1-(p+q) - u) - p(u-v),\nonumber\\
 Z(x,0) &=& q(x,0)-p(x,0) =: Z_0(x,0)
 \label{Z1}
 \end{eqnarray}
 and considerations as above show that $|Z(x,t)| \leq e^{L_1t}\Psi(x,t)$, where
  \begin{eqnarray}
 \Psi_t &=& d_v \Psi_{xx}+ e^{-L_1 t}z,\nonumber\\
 \Psi(x,0) &=& |Z_0(x,0)|.
 \label{Z2}
 \end{eqnarray}
 In the above, $L_1$ is a constant bounding $|1-(p+q)-u|, 0\leq p,q,u \leq 1$ and we used $0\leq v\leq 1$. Hence
 \begin{eqnarray*}
 |Z(x,\tau)| &\leq & \displaystyle\frac{e^{L_1\tau}}{\sqrt{4\pi d_v
\tau}}\displaystyle\int_{\R}\exp\left(-\displaystyle\frac{(x-y)^2}{{4d_v
\tau}}\right)|q_0(y)-p_0(y)| dy\\
&&+ \displaystyle\int_{0}^\tau \frac{e^{L_1(\tau-s)}}{\sqrt{4\pi d_v
(\tau-s)}}\int_{\R}\exp\left(-\displaystyle\frac{(x-y)^2}{4d_v
(\tau-s)}\right)z(y,s)dyds
\end{eqnarray*}
and the estimates follow as above where, in the second term, we use the fact that (\ref{jbest1}) is uniform on $[0,\tau]$ and any bounded subset of $\R$.
    \item For each $t\in]0, \tau]$, the functions $Q_t[\mathbf w_0]$ with
$\mathbf w_0=(u_0,q_0)\in\mathcal{C}_{\mathbf e_{u}}$ form an equicontinuous family.
Indeed, for $0<t\leq \tau$, $\mathbf w_0\in\mathcal{C}_{\mathbf e_{u}}$ and $x\in\R$, $Q_{t}[\mathbf w_0(x)]=:\mathbf w(t,x)=(u(t,x),q(t,x))$, following (\ref{solution-convolution}) and by using the
property of the spatial convolution, we obtain
 \begin{equation}
 \left\{
\begin{array}{l}
\displaystyle\frac{\partial u}{\partial x}=\displaystyle\frac{\partial G_{d_u}}{\partial x}\star u_0+\displaystyle\frac{\partial G_{d_u}}{\partial x}\star\star  f_{d_u}(u,q),\\
\displaystyle\frac{\partial q}{\partial x}=\displaystyle\frac{\partial G_{d_v}}{\partial x}\star q_0+\displaystyle\frac{\partial G_{d_v}}{\partial x}\star\star f_{d_v}(u,q).\\
\end{array}
\right.
\end{equation}
Since $(u_0, q_0)\in \mathcal{C}_{\mathbf e_{u}}$ i.e. $0\leq u_0\leq
1$ and $0\leq q_0\leq 1-\bar{v}$, we have $\|f_{d_u}(u,q)\|_{\infty}\leq 1$ and
$\|f_{d_v}(u,q)\|_{\infty}\leq M$ for some $M$. In addition, by direct calculation or, more generally, by
\cite[Theorem 11]{Friedman1964}, for $d=d_u,d_v$, there exist
positive constants $\alpha_d$ and $\beta_d$ such that for $t>0$
$$\left|\displaystyle\frac{\partial G_d(x,t)}{\partial x}\right|\leq
\frac{\alpha_de^{-\beta_d\frac{x^2}{t}}}{t}$$
 Hence,
\begin{equation}\label{derivation-solution-convolution1}
 \left\{
\begin{array}{l}
\displaystyle\left|\frac{\partial u}{\partial x}(x,t)\right|\leq \displaystyle\int_{\R}\frac{\alpha_{d_u}e^{-\beta_{d_u}\frac{(x-y)^2}{t}}}{t}dy+\int_0^t\int_{\R}\frac{\alpha_{d_u}e^{-\beta_{d_u}\frac{(x-y)^2}{t-s}}}{t-s}dyds,\\
\displaystyle\left|\frac{\partial q}{\partial x}(x,t)\right|\leq\displaystyle(1-\bar{v})\int_{\R}\frac{\alpha_{d_v}e^{-\beta_{d_v}\frac{(x-y)^2}{t}}}{t}dy+M\int_0^t\int_{\R}\frac{\alpha_{d_v}e^{-\beta_{d_v}\frac{(x-y)^2}{t-s}}}{t-s}dyds.
\end{array}
\right.
\end{equation}
Evaluating the integrals in (\ref{derivation-solution-convolution1}) we obtain
\begin{equation}\label{derivation-solution-convolution3}
 \left\{
\begin{array}{l}
\left|\displaystyle\frac{\partial u}{\partial x}(x,t)\right|\leq \alpha_{d_u}\sqrt{\displaystyle\frac{\pi}{\beta_{d_u}t}}+\alpha_{d_u}\sqrt{\displaystyle\frac{\pi t}{\beta_{d_u}}}=:\delta_{d_u}(t),\\
\left|\displaystyle\frac{\partial q}{\partial x}(x,t)\right|\leq (1-\bar{v})\alpha_{d_v}\sqrt{\displaystyle\frac{\pi}{\beta_{d_v}t}}+M\alpha_{d_v}\sqrt{\displaystyle\frac{\pi t}{\beta_{d_v}}}=:\delta_{d_v}(t),
\end{array}
\right.
\end{equation}
where  $\delta_{d_u}(t)$ and $\delta_{d_v}(t)$ do not depend on $u_0$,
$q_0$ and $x\in\R$. Thus the first spatial
derivative of the solution $\mathbf w=(u,q)$ is uniformly bounded. Hence, using the mean value theorem, we deduce that the family of solutions of system
(\ref{cooperation}) is equicontinuous. Then, part 4 of Lemma \ref{Q-tau-properties} follows from the
Arzela-Ascoli's theorem, see e.g. 
\cite[Corollary 41]{Royden1988} (i.e. any bounded and equicontinuous sequence of
continuous functions on a separable metric space contains a uniformly
convergent subsequence on every bounded subset).
\end{enumerate}
\end{proof}

In order to obtain properties of the operator $Q$ defined in
(\ref{Q-tau-operator}), we first
formulate results for the nonlinear operator $\mathbf H: B\rightarrow
B$. Recall that the Banach space considered here is
$B=C_{ub}(\R)\times C_{ub}(\R)$ endowed with the sup-norm. For
$\textbf{u}=(u,q)\in B$,
$$\mathbf H(\textbf{u})=((1-w(q)\psi(u))u;(1-\eta)q+\eta(1-\bar{v})).$$

\begin{lemma}(Some properties of $\mathbf H$)\label{H-properties}
\begin{enumerate}
    \item The nonlinear operator $\mathbf H$ is order preserving in the sense that
    if $\textbf{u}$ and $\textbf{v}$ are any two functions with
    $\textbf{v}\geq \textbf{u}$, then $\mathbf H(\textbf{v})\geq \mathbf H(\textbf{u})$.
    \item $\mathbf H$ is translation invariant.
    \item For any two functions $\textbf{u}$ and $\textbf{v}$,
    $\|\mathbf H(\textbf{v})-\mathbf H(\textbf{u})\|_B \leq C \|\textbf{v}-\textbf{u}\|_B$
     where $C\in \R$ and depend on $\|\textbf{u}\|_B$, $\|\textbf{v}\|_B$.
    \item If a sequence $\textbf{v}_n(x)$ converges
    uniformly on every bounded set, then $\mathbf H(\textbf{v}_n(x))$ also has the same property.
\end{enumerate}
\end{lemma}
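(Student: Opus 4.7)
The plan is to verify the four claims directly from the definition of $\mathbf H$, which acts pointwise via smooth scalar expressions, so that everything reduces to elementary facts about the two component maps $H_1(u,q)=(1-w(q)\psi(u))u$ and $H_2(u,q)=(1-\eta)q+\eta(1-\bar v)$ and the ambient Banach space structure of $B=C_{ub}(\R)\times C_{ub}(\R)$.

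For part 1, I would note that $H_2$ is affine and increasing in $q$ (with $0<\eta<1$), which handles the second component immediately. For $H_1$, I would invoke the monotonicity built into the model assumptions: by (\ref{omegaq-properties}) $w$ is decreasing with $0\leq w(q)<1$, and by (\ref{psi-properties}) $\psi$ is decreasing with $0<\psi(u)\leq 1$. Hence if $q_2\geq q_1$ and $u_2\geq u_1\geq 0$ then $w(q_2)\psi(u_2)\leq w(q_1)\psi(u_1)$, so $0\leq 1-w(q_1)\psi(u_1)\leq 1-w(q_2)\psi(u_2)$, and a two-step comparison multiplying by $0\leq u_1\leq u_2$ yields $H_1(u_1,q_1)\leq H_1(u_2,q_2)$.

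Part 2 is immediate because $\mathbf H$ contains no explicit $x$-dependence: for every $y\in\R$, $T_y[\mathbf H(\textbf u)](x)=\mathbf H(\textbf u)(x-y)=\mathbf H(\textbf u(x-y))=\mathbf H(T_y[\textbf u])(x)$. For part 3, I would apply the mean value inequality pointwise to $H_1$ and $H_2$, using that the partial derivatives $\partial_u H_1=1-w(q)(\psi(u)+u\psi'(u))$, $\partial_q H_1=-w'(q)\psi(u)u$, $\partial_u H_2=0$, $\partial_q H_2=1-\eta$ are continuous in $(u,q)$ (since $w,\psi\in\mathcal{C}^1$) and hence uniformly bounded on any ball of $B$. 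Taking the supremum in $x$ of the resulting pointwise estimate produces a Lipschitz constant $C$ depending only on $\|\textbf u\|_B$ and $\|\textbf v\|_B$.

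Part 4 will then follow from part 3: if $\textbf v_n\to\textbf v$ uniformly on every bounded subset $K\subset\R$, the sequence is uniformly bounded on $K$, so applying the local Lipschitz estimate pointwise on $K$ transfers the uniform convergence to $\mathbf H(\textbf v_n)$. There is no genuine obstacle in the argument; the only step requiring a moment's care is the sign check $1-w(q)\psi(u)\geq 0$ used in part 1, which is precisely what the bounds in (\ref{omegaq-properties}) and (\ref{psi-properties}) provide.
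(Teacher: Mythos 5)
Your proof is correct and follows essentially the same route as the paper's: pointwise monotonicity of the two component maps for part 1, the absence of explicit $x$-dependence for part 2, and the local Lipschitz property of $\mathbf H$ for parts 3 and 4. You are in fact slightly more careful than the paper, which asserts in its proof that $w$ is increasing (it is decreasing by (\ref{omegaq-properties}), which is what the inequality actually requires) and leaves implicit both the sign check $1-w(q)\psi(u)\geq 0$ and the derivative bounds behind the Lipschitz estimate.
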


\begin{proof}
Let $\textbf{u}=(u,q)$ and $\textbf{v}=(v,p)$ be such that
$\textbf{u}\leq\textbf{v}$. Hence
$(1-w(q)\psi(u))u\leq(1-w(p)\psi(v))v$ since $w$ (resp.
$\psi$) is increasing (resp. decreasing) and
$(1-\eta)q+\eta(1-\bar{v})\leq(1-\eta)p+\eta(1-\bar{v})$. Thus
$\mathbf H(\textbf{u})\leq H(\textbf{v})$ and part 1 of Lemma
\ref{H-properties} holds. Since $\mathbf H$ does not explicitly depend on
$x\in \R$, then $\mathbf H$ is translation invariant and part 2 of Lemma
\ref{H-properties} is valid. Part 3 follows from the local Lipschitz
property of $\mathbf H,$ while part 4 follows from the continuity of $\mathbf H$.
This ends the proof.
\end{proof}

Combining Lemmas \ref{Q-tau-properties} and \ref{H-properties}, we
deduce the following result for the recursion operator $Q:=\mathbf H\circ Q_\tau$, defined
in (\ref{Q-tau-operator}).

\begin{lemma}(Some properties of $Q$)\label{Q-properties}
\begin{enumerate}
    \item The operator $Q$ is order preserving in the sense that
    if $\textbf{u}$ and $\textbf{v}$ are any two functions in $\mathcal{C}_{\mathbf e_{u}}$ with
    $\textbf{v}\geq \textbf{u}$, then $Q[\textbf{v}]\geq Q[\textbf{u}]$.
    \item $Q$ is translation invariant.
    \item For any fixed $x$, $|Q[\textbf{v}](x)-Q[\textbf{u}](x)|$
    is arbitrarily small, provided
    $|\textbf{v}(y)-\textbf{u}(y)|$ is sufficiently small on a
    sufficiently long interval centered at $x$.
    \item Every sequence $\textbf{v}_n(x)$ in
    $\mathcal{C}_{\mathbf e_{u}}$ has a subsequence
    $\textbf{v}_{n_l}$ such that $Q[\textbf{v}_{n_l}]$ converges
    uniformly on every bounded set.
\end{enumerate}
\end{lemma}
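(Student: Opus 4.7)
The plan is to derive each of the four properties of $Q = \mathbf H \circ Q_\tau$ by composing the corresponding properties already established for $Q_\tau$ in Lemma \ref{Q-tau-properties} and for $\mathbf H$ in Lemma \ref{H-properties}. Before doing so, I will first observe that $Q_\tau$ maps $\mathcal{C}_{\mathbf e_u}$ into a uniformly bounded subset of $B$: indeed, by part 1 of Lemma \ref{Q-tau-properties}, any $\mathbf w_0\in \mathcal{C}_{\mathbf e_u}$ satisfies $\mathbf 0 \leq Q_\tau[\mathbf w_0] \leq Q_\tau[\mathbf e_u]$, and since $\mathbf e_u$ is a spatially homogeneous equilibrium of the underlying ODE system one has $Q_\tau[\mathbf e_u]=\mathbf e_u$; this ensures that the composition with $\mathbf H$ is well defined and that the Lipschitz constant in part 3 of Lemma \ref{H-properties} can be chosen uniformly on the range of $Q_\tau$ restricted to $\mathcal{C}_{\mathbf e_u}$.

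For part 1, if $\mathbf u \leq \mathbf v$ in $\mathcal{C}_{\mathbf e_u}$, then by Lemma \ref{Q-tau-properties}(1), $Q_\tau[\mathbf u]\leq Q_\tau[\mathbf v]$, and these belong to $\mathcal{C}_{\mathbf e_u}$ by the observation above, so by Lemma \ref{H-properties}(1) we conclude $Q[\mathbf u] = \mathbf H(Q_\tau[\mathbf u]) \leq \mathbf H(Q_\tau[\mathbf v]) = Q[\mathbf v]$. For part 2, translation invariance follows because the translation operator $T_y$ commutes with both $Q_\tau$ (Lemma \ref{Q-tau-properties}(2)) and $\mathbf H$ (Lemma \ref{H-properties}(2)), so $T_y\circ Q = T_y\circ \mathbf H\circ Q_\tau = \mathbf H\circ T_y\circ Q_\tau = \mathbf H\circ Q_\tau\circ T_y = Q\circ T_y$.

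For part 3, fix $x\in\mathbb{R}$ and $\varepsilon>0$. Let $C$ be the Lipschitz constant of $\mathbf H$ on the (uniformly bounded) range of $Q_\tau$ on $\mathcal{C}_{\mathbf e_u}$ from Lemma \ref{H-properties}(3); since $\mathbf H$ acts pointwise as a smooth algebraic function of its argument, this Lipschitz estimate is also pointwise: $|\mathbf H(\mathbf a)(x) - \mathbf H(\mathbf b)(x)| \leq C\,|\mathbf a(x)-\mathbf b(x)|$. By Lemma \ref{Q-tau-properties}(3) applied to the point $x$ and the tolerance $\varepsilon/C$, there exist $\delta>0$ and $r>0$ such that whenever $\mathbf u,\mathbf v\in\mathcal{C}_{\mathbf e_u}$ satisfy $|\mathbf v(y)-\mathbf u(y)|\leq \delta$ for all $y\in[x-r,x+r]$, one has $|Q_\tau[\mathbf v](x) - Q_\tau[\mathbf u](x)| \leq \varepsilon/C$; composing with $\mathbf H$ then gives $|Q[\mathbf v](x) - Q[\mathbf u](x)| \leq \varepsilon$. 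For part 4, given a sequence $\mathbf v_n\in \mathcal{C}_{\mathbf e_u}$, Lemma \ref{Q-tau-properties}(4) yields a subsequence $\mathbf v_{n_l}$ such that $Q_\tau[\mathbf v_{n_l}]$ converges uniformly on every bounded subset of $\mathbb{R}$; Lemma \ref{H-properties}(4) then shows that $\mathbf H(Q_\tau[\mathbf v_{n_l}]) = Q[\mathbf v_{n_l}]$ has the same property.

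The only mildly delicate step is part 3, where I need a pointwise rather than $B$-norm version of the Lipschitz property of $\mathbf H$; this is automatic from the explicit algebraic formula for $\mathbf H$, but it is worth flagging explicitly since Lemma \ref{H-properties}(3) is stated in terms of the sup-norm on $B$. Apart from this bookkeeping, the argument is a routine composition, so no fundamentally new analytical difficulty arises beyond what was already handled in the proofs of Lemmas \ref{Q-tau-properties} and \ref{H-properties}.
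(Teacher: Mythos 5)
Your proposal matches the paper exactly: the paper offers no proof of this lemma beyond the sentence ``Combining Lemmas \ref{Q-tau-properties} and \ref{H-properties}, we deduce\dots'', and your write-up is precisely that composition, spelled out (your remark that the sup-norm Lipschitz bound for $\mathbf H$ in Lemma \ref{H-properties}(3) upgrades to a pointwise bound because $\mathbf H$ is a superposition operator is a useful detail the paper leaves implicit). One sub-claim in your preliminary observation is wrong, though harmlessly so: $\mathbf 0\leq Q_\tau[\mathbf w_0]$ does not follow from order preservation, because $\mathbf 0=\mathbf e_v$ is a fixed point of the full recursion $Q=\mathbf H\circ Q_\tau$ but \emph{not} of $Q_\tau$ alone --- at $(u,q)=(0,0)$ the reaction term in (\ref{cooperation}) gives $q_t=-\lambda\bar v(1-\bar v)<0$, so $Q_\tau[\mathbf 0]$ has a negative second component and $\mathcal C_{\mathbf e_u}$ is not invariant under $Q_\tau$. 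Since all you actually use is that $Q_\tau(\mathcal C_{\mathbf e_u})$ is uniformly bounded (true: $0\leq U,V\leq 1$ forces $-\bar v\leq q\leq 1-\bar v$) and that $\mathbf H$ is defined and locally Lipschitz on all of $B$, the argument stands; just replace the claimed lower bound $\mathbf 0\leq Q_\tau[\mathbf w_0]$ by the correct bound $Q_\tau[\mathbf 0]\leq Q_\tau[\mathbf w_0]\leq \mathbf e_u$ together with the a priori estimate from the basic-properties subsection.
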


In the sequel, we assume that $\mathcal{R}_0=(1-\eta)e^{\lambda\tau}>1$ and
$\mathcal{R}_1=(1-\eta)e^{\lambda\tau(1-\gamma)}<1$; that is, the coexistence equilibrium
$\mathbf e_{u}=(1,1-\bar{v})$ exists and is stable. We also assume
that $\mathcal{R}_2=\left(1-w(0)\psi(0)\right)e^\tau>1$; that is,  $\mathbf e_v=(0,0)$ is unstable.

Taking into account Lemma \ref{Q-properties}, we deduce that the
recursion operator $Q$ defined in (\ref{Q-tau-operator}) verifies all conditions of \textbf{Hypothesis 2.1.} Consequently, we can apply the results
of Li et al.  \cite{Li2005} that deal with the spreading speeds
and existence of traveling wave solutions for systems
(\ref{cooperation})-(\ref{mise-a-jour}). Recall that a traveling wave of speed $c$ is a solution of the
recursion (\ref{Q-tau-operator}) which has the form
$\textbf{u}_n(x,0)=\textbf{Z}(x-nc)$ with $\textbf{Z}(s)$ being a function
in $\mathcal{C}_{\mathbf e_{u}}$. That is, the solution at time $n + 1$ is
simply the translate by $c$ of its value at $n$. Using the definition of $c^*$ (see (\ref{c-etoile-general})) and $c^*_f$ (see
(\ref{c-etoile-f-general})), the following result holds true.

\begin{theorem}\label{theorem}(Spreading speeds and traveling waves)
\begin{itemize}
    \item[(a).]\textbf{Slowest spreading speed}: There is an index $j\in\{1,2\}$ for which the following statement is
true: Suppose that the initial function $\textbf{u}_0(x)$ is
\textbf{\emph{0}} for all sufficiently large $x$, and that there are
positive constants $0<\rho\leq\sigma<1$ such that
$\textbf{\emph{0}}\leq\textbf{u}_0\leq\sigma\mathbf  e_{u}$ for all $x$ and
$\textbf{u}_0\geq\rho \mathbf  e_{u}$ for all sufficiently negative $x$. Then
for any positive $\varepsilon$ the solution $\textbf{u}_n$ of the
recursion (\ref{Q-tau-operator}) has the properties
\begin{equation}\label{slowest-speed}
    \lim\limits_{n\rightarrow+\infty}\left[\sup\limits_{x\geq
    n(c^*+\varepsilon)}\{\textbf{u}_n\}_j(x)\right]=0
\end{equation}
and
\begin{equation}\label{slowest-speed-2}
    \lim\limits_{n\rightarrow+\infty}\left[\sup\limits_{x\leq
    n(c^*-\varepsilon)}\{\mathbf e_{u}-\textbf{u}_n(x)\}\right]=\textbf{\emph{0}};
\end{equation}
that is, the $j$th component spreads at a speed no higher than
$c^*$, and no component spreads at a lower speed.
\item[(b).]\textbf{Fastest spreading speed}: There is an index $i\in\{1,2\}$ for which the following statement is
true: Suppose that the initial function $\textbf{u}_0(x)$ is
\textbf{\emph{0}} for all sufficiently large $x$, and that there are
positive constants $0<\rho\leq\sigma<1$ such that
$\textbf{\emph{0}}\leq\textbf{u}_0\leq\sigma \mathbf e_{u}$ for all $x$ and
$\textbf{u}_0\geq\rho \mathbf e_{u}$ for all sufficiently negative $x$. Then
for any positive $\varepsilon$ the solution $\textbf{u}_n$ of the
recursion (\ref{Q-tau-operator}) has the properties
\begin{equation}\label{fastest-speed}
    \limsup\limits_{n\rightarrow+\infty}\left[\inf\limits_{x\leq
    n(c^*_f-\varepsilon)}\{\textbf{u}_n\}_i(x)\right]>0
\end{equation}
and
\begin{equation}\label{fastest-speed-2}
    \lim\limits_{n\rightarrow+\infty}\left[\sup\limits_{x\geq
    n(c^*_f+\varepsilon)}\textbf{u}_n(x)\right]=\textbf{\emph{0}};
\end{equation}
that is, the $i$th component spreads at a speed no less than
$c^*_f$, and no component spreads at a higher speed.
\item[(c).]\textbf{Monostable traveling wave}: If $c\geq c^*$, there is a non-increasing monostable traveling wave solution
$\textbf{Z}(x-nc)$ of speed $c$ with $\textbf{Z}(-\infty)=\mathbf e_{u}$ and
$\textbf{Z}(+\infty)$ an equilibrium other than $\mathbf e_{u}$.
\par
If there is a traveling wave $\textbf{Z}(x-nc)$ with
$\textbf{Z}(-\infty)=e_{u}$ such that for at least one component
$i\in\{1,2\}$ $$\liminf\limits_{x\rightarrow\infty}Z_i(x)=0,$$ then
$c\geq c^*$. If this property is valid for all components of
$\textbf{Z}$, then $c\geq c^*_f$.
\end{itemize}
\end{theorem}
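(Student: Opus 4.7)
The plan is to reduce the theorem to a direct application of Theorems \ref{theorem-slowest-speed}, \ref{theorem-fastest-speed} and \ref{theorem-TW} of Li et al. \cite{Li2005} to the recursion operator $Q = \mathbf H \circ Q_\tau$ defined in (\ref{Q-tau-operator}), with the equilibrium $\beta$ of Hypothesis 2.1 played by the coexistence equilibrium $\mathbf e_u = (1, 1-\bar v)$ of the transformed system (\ref{cooperation})--(\ref{mise-a-jour}). The whole mathematical content of the proof is therefore to verify that, under the standing assumptions $\mathcal{R}_0 > 1$, $\mathcal{R}_1 < 1$ and $\mathcal{R}_2 > 1$, the operator $Q$ satisfies all five conditions of Hypothesis 2.1 on the order interval $\mathcal{C}_{\mathbf e_u} = [\mathbf 0, \mathbf e_u]$.

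Items (i), (iii), (iv) and (v) of Hypothesis 2.1 are already contained in Lemma \ref{Q-properties}, which was obtained by combining the corresponding properties of $Q_\tau$ (Lemma \ref{Q-tau-properties}) and of the updating operator $\mathbf H$ (Lemma \ref{H-properties}). Thus the only remaining work is to verify item (ii), which has three parts: $Q[\mathbf 0] = \mathbf 0$, $Q[\mathbf e_u] = \mathbf e_u$, and the assertion that for every constant $\mathbf u_0 \gg \mathbf 0$ in $\mathcal{C}_{\mathbf e_u}$ the iterates of the recursion converge to $\mathbf e_u$. The first two identities follow immediately from Lemma \ref{homogeneous-trivial-equilibria} after the coordinate change (\ref{new-variables2}), because the space-homogeneous solutions of (\ref{cooperation})--(\ref{mise-a-jour}) are exactly the fixed points of the map (\ref{homogeneous-recursion}), and under the current assumptions the equilibria of that map in $\mathcal{C}_{\mathbf e_u}$ are exactly $\mathbf 0$, $\mathbf e_0$ and $\mathbf e_u$.

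The real work lies in the global attractivity statement, and this is what I expect to be the main obstacle. Local asymptotic stability of $\mathbf e_u$ was established in Lemma \ref{homogeneous-stability}, but I need a global argument on $\mathcal{C}_{\mathbf e_u} \setminus \{\mathbf 0\}$. The idea is to exploit that the space-homogeneous recursion (\ref{homogeneous-recursion}) is a monotone cooperative two-dimensional map on the rectangle $[0,1]\times[0,1-\bar v]$. Given $\mathbf u_0 \gg \mathbf 0$, choose $0 < \rho < 1$ small enough that $\rho\mathbf e_u \leq \mathbf u_0 \leq \mathbf e_u$; by monotonicity, the iterates starting from $\rho\mathbf e_u$ form a non-decreasing sequence bounded above by $\mathbf e_u$, hence convergent to a fixed point $\mathbf e^\star$ with $\rho\mathbf e_u \leq \mathbf e^\star \leq \mathbf e_u$. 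The only candidates are $\mathbf e_0$, $\mathbf e_v$ and $\mathbf e_u$, but $\mathbf e_v \not\geq \rho\mathbf e_u$, and $\mathbf e_0 = (0,1-\bar v)$ is ruled out because $\mathbf e^\star \geq \rho\mathbf e_u$ forces the first component to stay bounded below; by the instability eigenvalue $e^\tau > 1$ of $\mathbf e_0$ computed in Lemma \ref{homogeneous-stability}, any orbit with $u_n(0) > 0$ increases away from the line $\{u=0\}$. Hence $\mathbf e^\star = \mathbf e_u$, and by squeezing $\mathbf u_n$ between the non-decreasing sequence starting from $\rho\mathbf e_u$ and the constant $\mathbf e_u$ we conclude $\mathbf u_n \to \mathbf e_u$.

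With Hypothesis 2.1 fully verified, the three conclusions of the theorem follow verbatim: part (a) is the translation of Theorem \ref{theorem-slowest-speed}, part (b) of Theorem \ref{theorem-fastest-speed}, and part (c) of Theorem \ref{theorem-TW}, with the slowest and fastest spreading speeds $c^\star$ and $c^\star_f$ defined by (\ref{c-etoile-general}) and (\ref{c-etoile-f-general}) applied to the present $Q$, and $\beta$ replaced throughout by $\mathbf e_u$.
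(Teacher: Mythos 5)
Your overall route is the same as the paper's: the proof given there is a one-line reduction to Theorems 2.1, 2.2 and 3.1 of Li et al., justified by the claim, made just before the theorem, that $Q=\mathbf H\circ Q_\tau$ satisfies Hypothesis 2.1 on $[\mathbf 0,\mathbf e_u]$, with items (i), (iii)--(v) supplied by Lemma \ref{Q-properties}. Your final paragraph therefore matches the paper exactly, and you are right that the only substantive issue is item (ii).

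The one place where you go beyond the paper --- the global attractivity of $\mathbf e_u$ for constant data $\mathbf u_0\gg\mathbf 0$ --- contains a genuine gap. You assert that ``by monotonicity, the iterates starting from $\rho\mathbf e_u$ form a non-decreasing sequence.'' Order-preservation of $Q$ does not make the orbit of an arbitrary point monotone: the orbit of $p$ is non-decreasing if and only if $Q[p]\geq p$, i.e.\ if $p$ is a sub-equilibrium, and $\rho\mathbf e_u$ need not be one. The linearization of the homogeneous recursion at $\mathbf e_v=\mathbf 0$ has eigenvalues $J_{11}=\mathcal R_2>1$ and $J_{22}=\mathcal R_0^{-2}<1$, so the map is contracting in the $q$-direction near $\mathbf 0$; to first order $F_2(\rho,\rho(1-\bar v))\approx J_{21}\rho+J_{22}\rho(1-\bar v)$, which falls below $\rho(1-\bar v)$ unless the off-diagonal entry $J_{21}$ (left uncomputed in the paper) exceeds $(1-J_{22})(1-\bar v)$. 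A correct argument must take the sub-equilibrium along a perturbation of the unstable eigenvector of the Jacobian at $\mathbf 0$, or invoke the convergence theory for planar cooperative maps. A second, smaller issue: your claim that the only fixed points in $[\mathbf 0,\mathbf e_u]$ are $\mathbf 0$, $\mathbf e_0$ and $\mathbf e_u$ is not delivered by Lemma \ref{homogeneous-trivial-equilibria}, which only lists the trivial and semi-trivial equilibria and does not exclude an interior fixed point solving (\ref{equilibre-u}) and (\ref{equilibre-v}) simultaneously. To be fair, the paper itself never verifies Hypothesis 2.1(ii) --- it passes from Lemma \ref{Q-properties} directly to ``all conditions of Hypothesis 2.1'' --- so you have correctly located the weakest link in the argument, but your proposed repair does not yet close it.
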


\begin{proof}
Since  the
recursion operator $Q$ defined in (\ref{Q-tau-operator}) verifies \textbf{Hypothesis 2.1.}, the proof of Theorem \ref{theorem} follows directly from Theorems 2.1, 2.2 and 3.1 of Li et al. \cite{Li2005}.
\end{proof}

Let us point out that if, instead of the first coordinates
change (\ref{new-variables1}), we considered
\begin{equation}\label{new-variables-new1}
\left\{
    \begin{array}{ccl}
      u_n & = & 1-U_n, \\
      v_n & = & V_n,
    \end{array}
\right.
\end{equation}
then we would obtain a monotone increasing system (see Appendix
\ref{appendix-scaling-grassland}). Hence, by reasoning as before,
one can study the case where the equilibrium $\textbf{0}$
is stable and the equilibrium $\mathbf e_{u}$ is unstable. However,
the bistable case, i.e. when both \textbf{0} and $\mathbf e_{u}$ are
simultaneously stable, remains an open problem.

\subsection{Numerical simulations}
In this section we provide numerical simulations of the
 impulsive tree-grass reaction-diffusion model (\ref{I-savnna})-(\ref{pulsed_swv_eq2}). We note that the parameters with $\tilde{}$ below refer to this system and can be derived from the corresponding parameters related to the normalized system
(\ref{cooperation})-(\ref{mise-a-jour}), see Appendix \ref{Appendix-scaling}. Thus, we consider fire events as periodic and pulse perturbations with the    time period $\tilde{\tau}$.
 The form of the functions $\gamma_{G}(\textbf{W})$, $\gamma_{T}(\textbf{W})$,
 $K_{G}(\textbf{W})$, $K_{T}(\textbf{W})$, $\psi$ and  $w_G$ is considered following Yatat et al.  \cite{Yatat2018}. The readers are  referred to Appendix \ref{Appendix-scaling} for their definition and parametrization. The parameter values used in the following simulations are also given in Appendix \ref{Appendix-scaling}: see Tables \ref{table-param-1} and \ref{table-param-2}.

%
%
Using the parameters values given in Table \ref{table-param-1}, page \pageref{table-param-1}, Fig. \ref{spreadingl} depicts the spreading of tree and grass
biomasses toward the stable forest homogeneous steady state
$\mathbf E_T=(273.3955,0)$. 
In this case, $\tilde{\mathcal{R}}_0=1.3667$,
$\tilde{\mathcal{R}}_1=0.0058$ and $\tilde{\mathcal{R}}_2=3.3801$.
Recall that $\mathbf E_T$ is LAS whenever $\tilde{\mathcal{R}}_1<1$, while
the grassland homogeneous steady state $\mathbf E_G=(0,3.3888)$ exists when
$\tilde{\mathcal{R}}_0>1$ and is LAS whenever
$\tilde{\mathcal{R}}_2<1$. In terms of tree-grass interactions,
Fig. \ref{spreadingl} illustrates the spreading of forest or the
so-called 'forest encroachment' phenomenon (Yatat et al.
\cite{Yatat2017b}).
\begin{figure}[H]
    \centering
\subfloat[][]{
\includegraphics[scale=0.52]{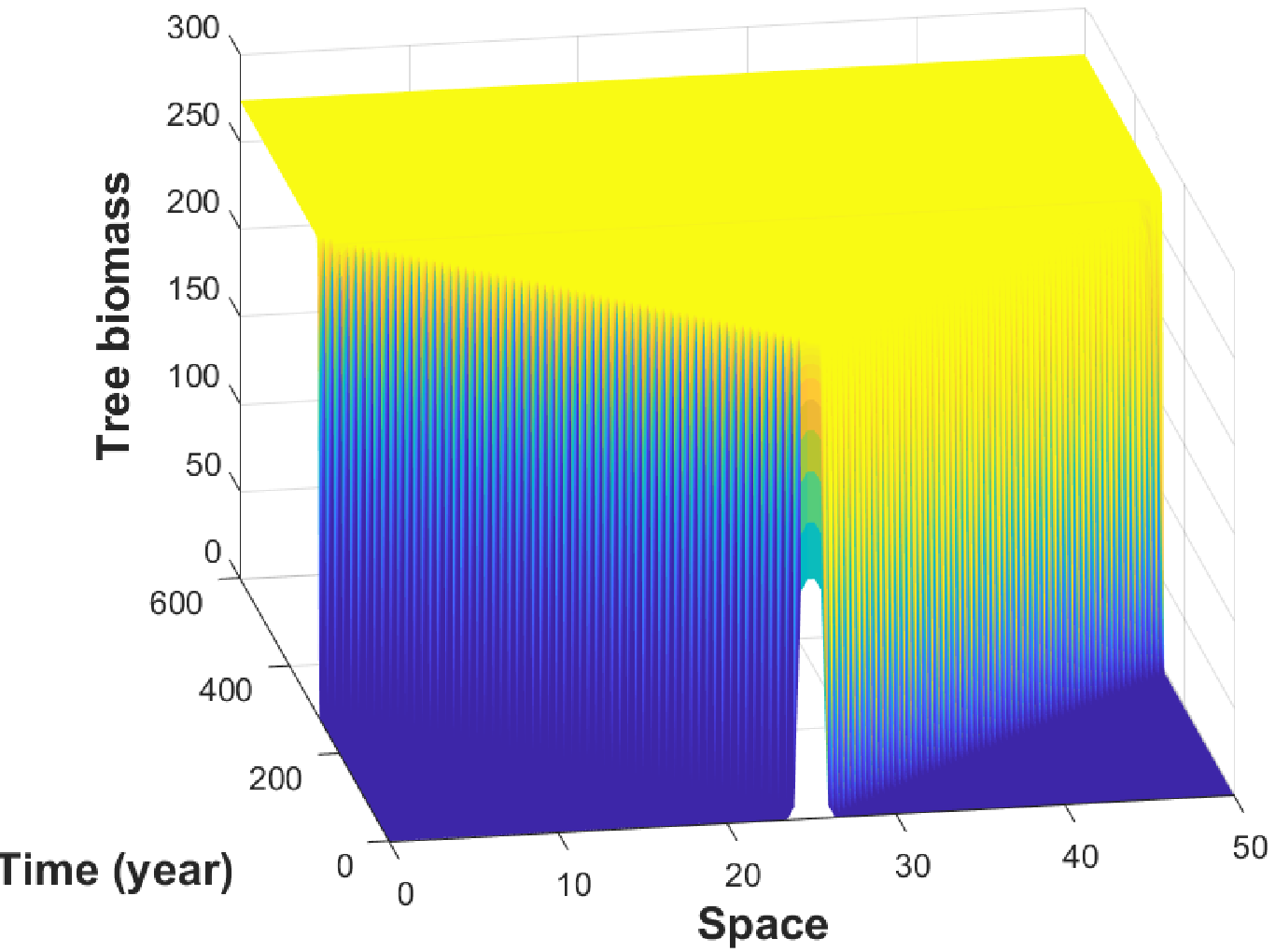}}
\vspace{0.5cm} \subfloat[][]{
\includegraphics[scale=0.52]{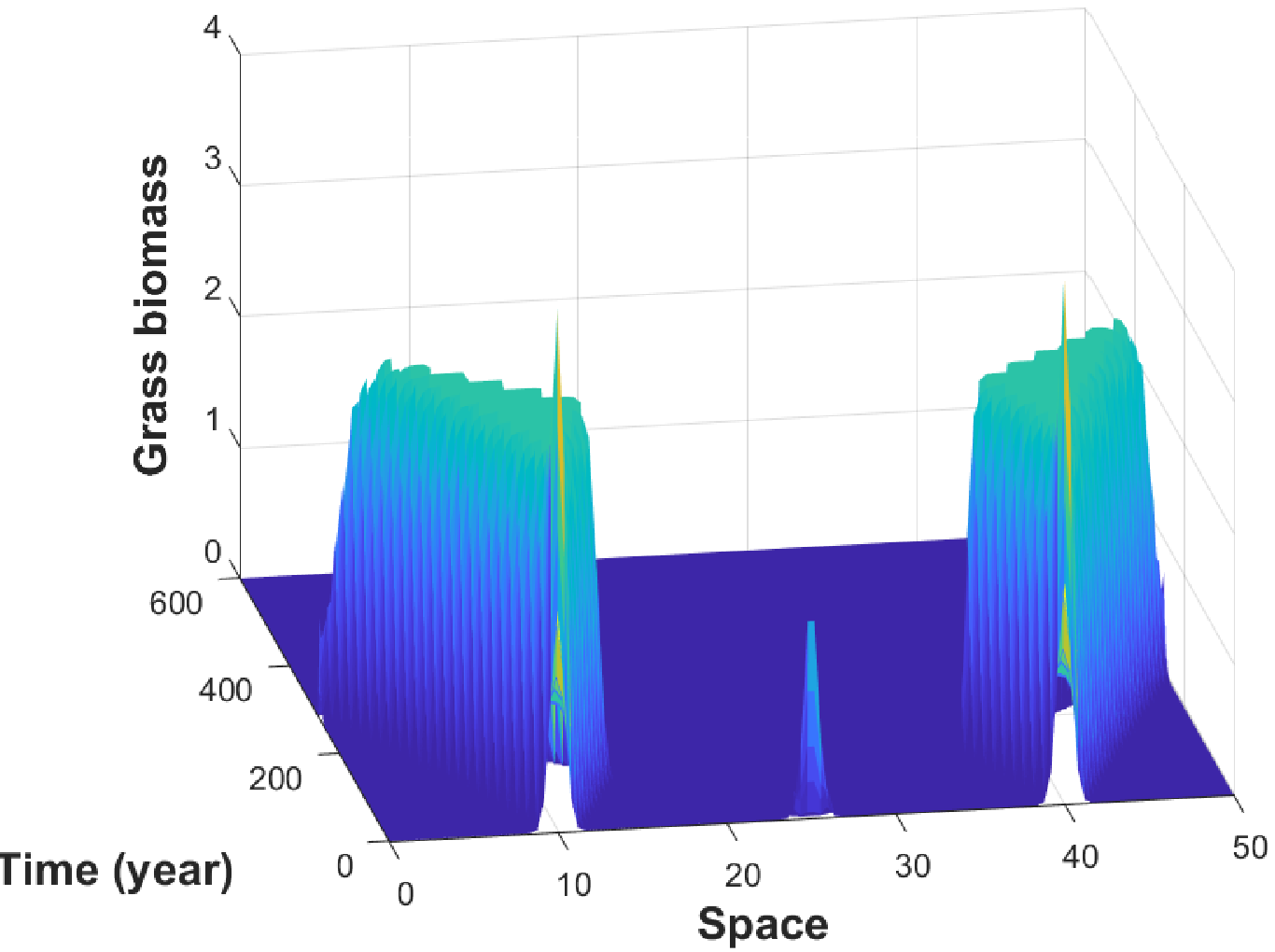}}
\caption{ Illustration of the spreading of both tree (see panel (a))
and grass (see panel (b)) biomasses toward the stable forest
equilibrium $\mathbf E_T=(273.3955,0)$ with system
(\ref{I-savnna})-(\ref{pulsed_swv_eq2}). $\textbf{W}=1200$ mm.yr$^{-1}$,
$\tilde{\tau}=2$ yr, $d_T$=0.001 and
$d_G$=0.002. Remaining parameters are in Table
\ref{table-param-1}, page \pageref{table-param-1}.}\label{spreadingl}
 \end{figure}
In the setting of the forest encroachment phenomenon, we carry out numerical simulations to compute the spreading speed of forest
biomass. We investigate the relationship between the tree biomass
diffusion coefficient and its spreading speed. To estimate the
spreading speed of the tree biomass that undergoes a forest
encroachment, grass biomass diffusion coefficient $d_G$ is kept
constant and equal to 0.002 while tree biomass diffusion coefficient
$d_T$ varies in the range $[0.001, 0.9]$. In the diffusive logistic
equation, a linear relationship is obtained between the wave speed
and the square root of the diffusion coefficient (e.g. Volpert \cite{Volpert2014}, Yatat et al.
\cite{Yatat2017b}, Yatat and Dumont \cite{YatatDumont2018}). Hence,
for the tree biomass, we consider an equation of the form
$c_T(d_T)=a_1d_T^{a_2}$ to be fitted for the data shown in Fig. \ref{speed_fitting1}(b), page \pageref{speed_fitting1}, where $c_T\in[0.0865, 1.6899]$. We found that
$a_1\in(1.7624, 1.7861)$ and $a_2\in(0.4816, 0.4911)$ with 95\%
confidence. In fact, $a_1=1.7743$ and $a_2=0.48634$, with $r^2=1$, indicating that 100\% of the variance of the data is
explained by the equation.
\begin{figure}[H]
    \centering
        \subfloat[][]{  \includegraphics[scale=0.52]{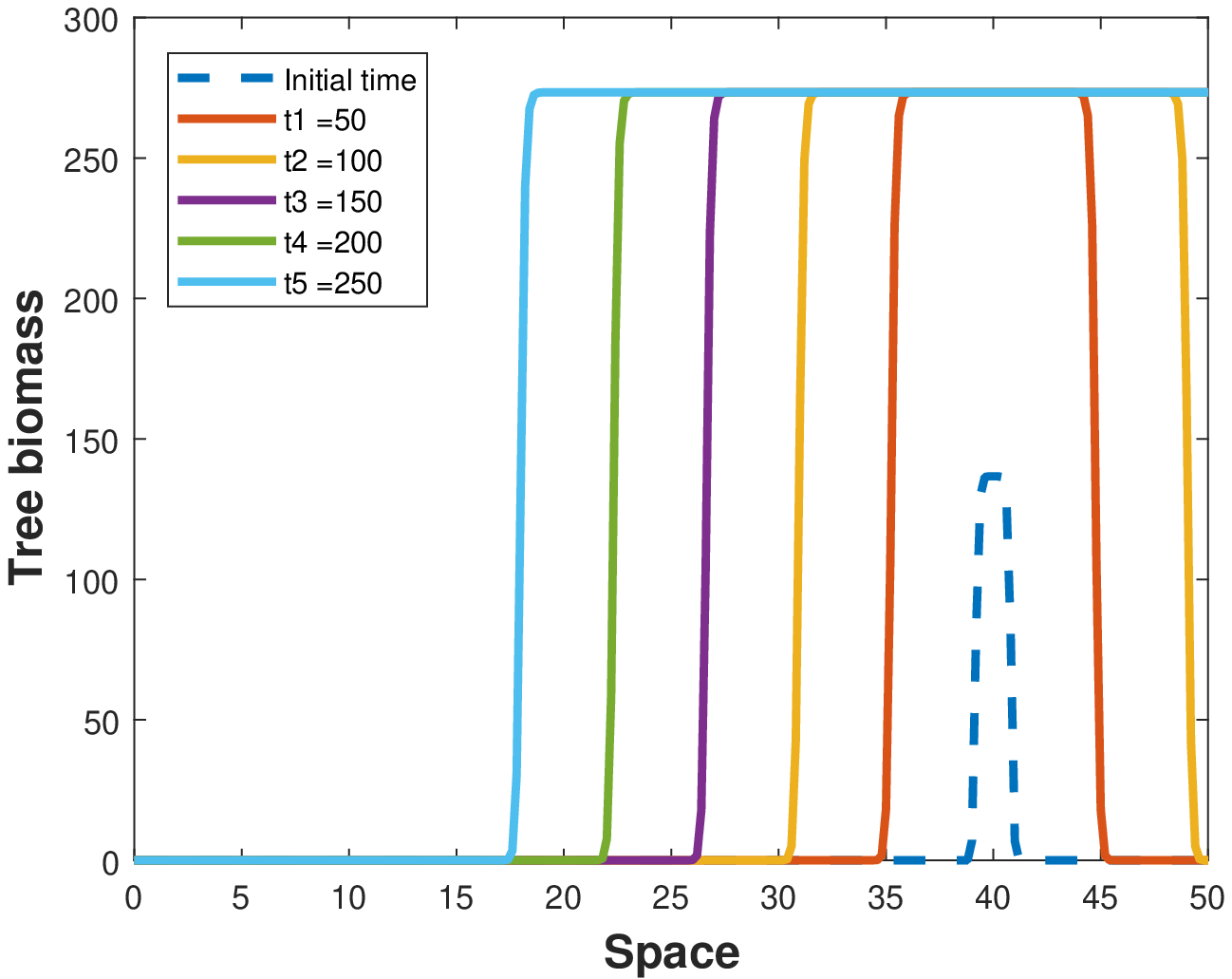}}
        \vspace{0.5cm}
        \subfloat[][]{  \includegraphics[scale=0.52]{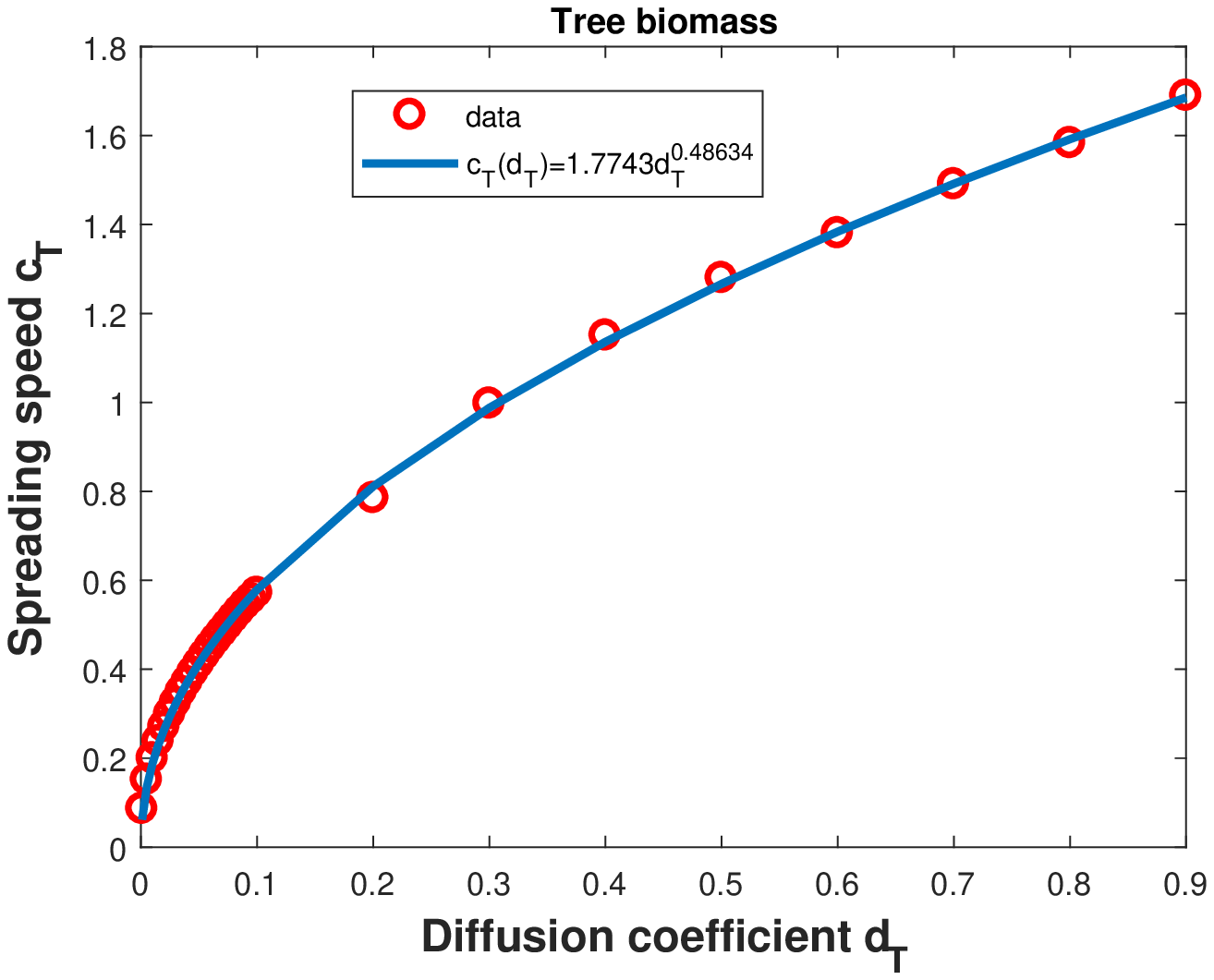}}
        \caption{The forest homogeneous steady state $\mathbf E_T=(273.3955,0)$ is stable while the
        grassland homogeneous steady state $\mathbf E_G=(0,3.3888)$ is
        unstable. We illustrate the forest encroachment phenomenon  (see panel (a)) with $\textbf{W}=1200$ mm.yr$^{-1}$,
$\tilde{\tau}=2$ yr, $d_{T}$=0.001,
$d_G$=0.002 and, we carry out spreading speeds fitting for
        tree biomass (see panel (b)).
         Remaining parameters are in Table \ref{table-param-1}, page \pageref{table-param-1}.}\label{speed_fitting1}
 \end{figure}
With the parameter values given in Table \ref{table-param-2}, page \pageref{table-param-2}, Fig. \ref{spreading2} illustrates the spreading of both tree
and grass biomasses toward the grassland homogeneous steady state
$\mathbf E_G=(0, 2.1096)$. 
In this case, $\tilde{\mathcal{R}}_0=2.0425$,
$\tilde{\mathcal{R}}_1=1.2541$ and $\tilde{\mathcal{R}}_2=0.9932$.
Recall that $\mathbf E_G$ exists when $\tilde{\mathcal{R}}_0>1$ and is LAS
whenever $\tilde{\mathcal{R}}_2<1$. 
 We further investigate the relationship
between the diffusion coefficient and the spreading speed of the
grass biomass. We assume that $d_T=0.001$ and
 $d_G\in[0.001, 1]$.
 Motivated by the linear relationship obtained between the wave speed
and the square root of the diffusion coefficient in the diffusive
logistic equation, an equation like
$c_G(d_G)=a_1d_G^{a_2}$ was fitted to the data shown in Fig.
\ref{spreading2bis}(b), page \pageref{spreading2bis}, where $c_G\in[0.0465, 1.0877]$. We found that
$a_1\in(1.0829, 1.0935)$ and $a_2\in(0.4839, 0.4915)$ with 95\%
confidence. In fact $a_1=1.0882$ and $a_2=0.48769$, with $r^2=1$,  indicating that 100\% of the variance of the data is
explained by the equation.

\begin{figure}[H]
    \centering
        \subfloat[][]{  \includegraphics[scale=0.52]{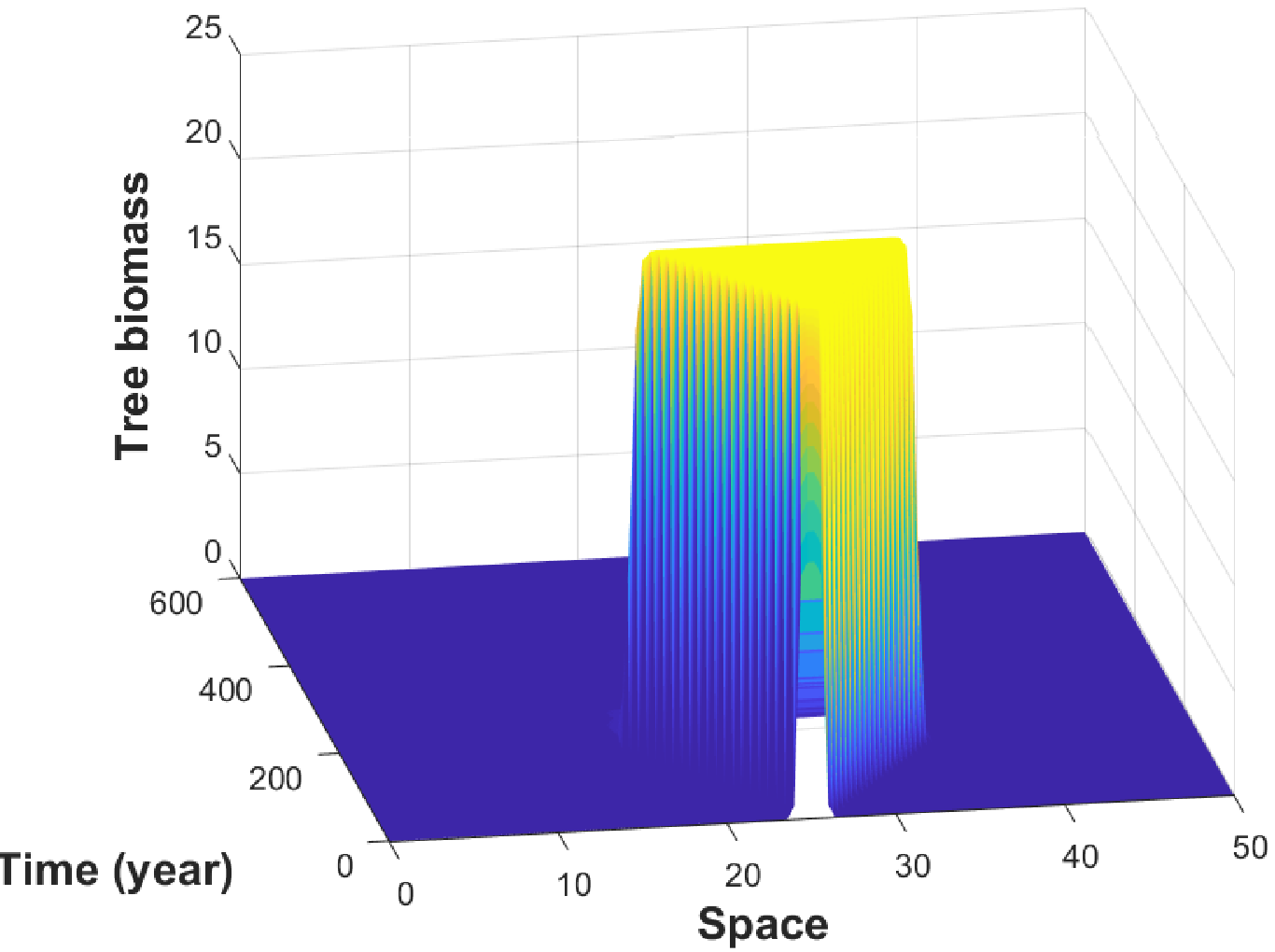}}
        \vspace{0.5cm}
        \subfloat[][]{  \includegraphics[scale=0.52]{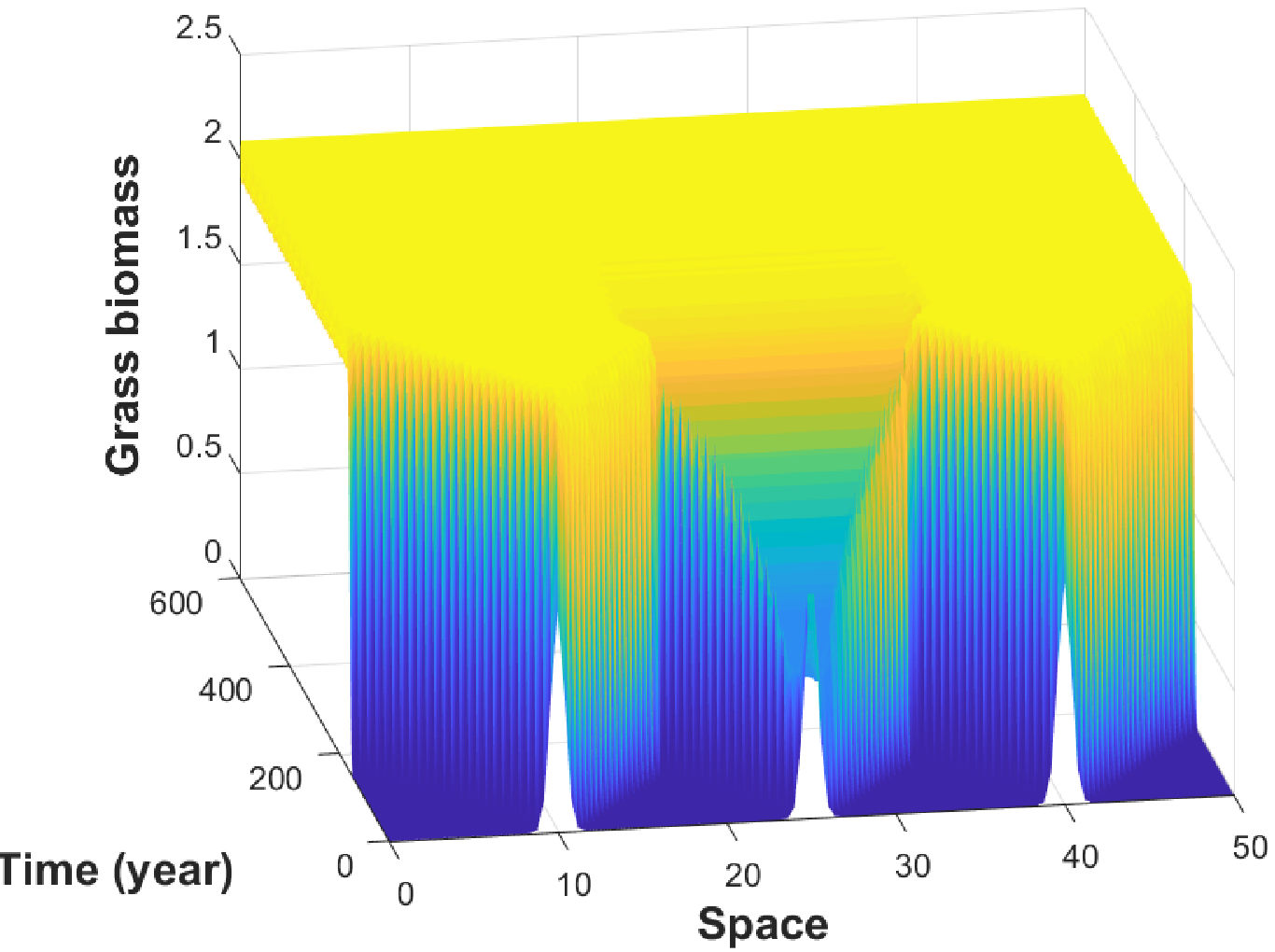}}
        \caption{ Illustration of the spreading of both tree (panel (a)) and
        grass (panel (b)) biomasses
        toward the grassland equilibrium $\mathbf E_G=(0, 2.1096)$. $\textbf{W}=450$ mm.yr$^{-1}$,
$\tilde{\tau}=2$ yr, $d_{T}$=0.001,
$d_G$=0.002. Remaining parameters are in Table
\ref{table-param-2}, page \pageref{table-param-2}.}\label{spreading2}
 \end{figure}

 \begin{figure}[H]
    \centering
        \subfloat[][]{  \includegraphics[scale=0.52]{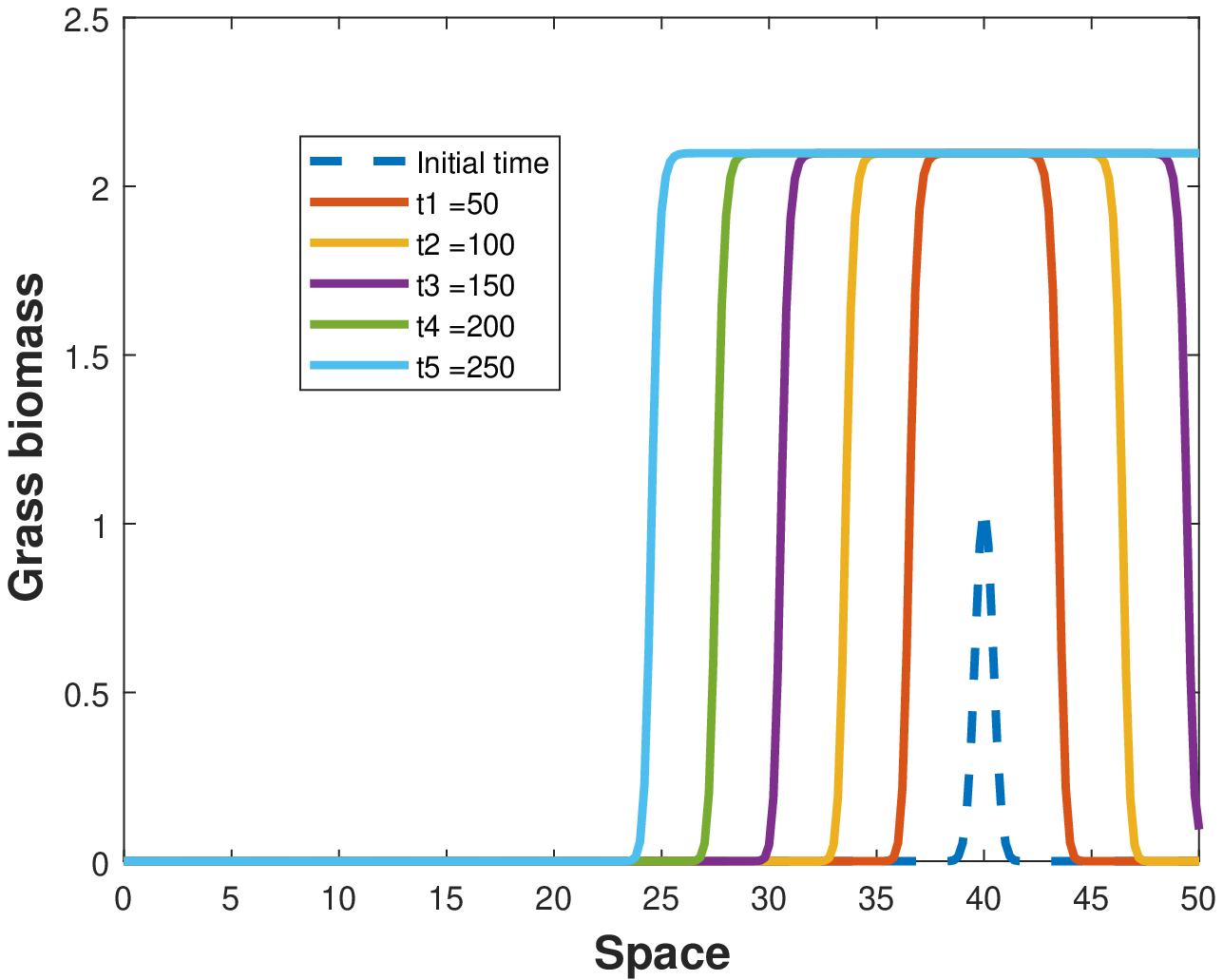}}
        \vspace{0.5cm}
        \subfloat[][]{  \includegraphics[scale=0.52]{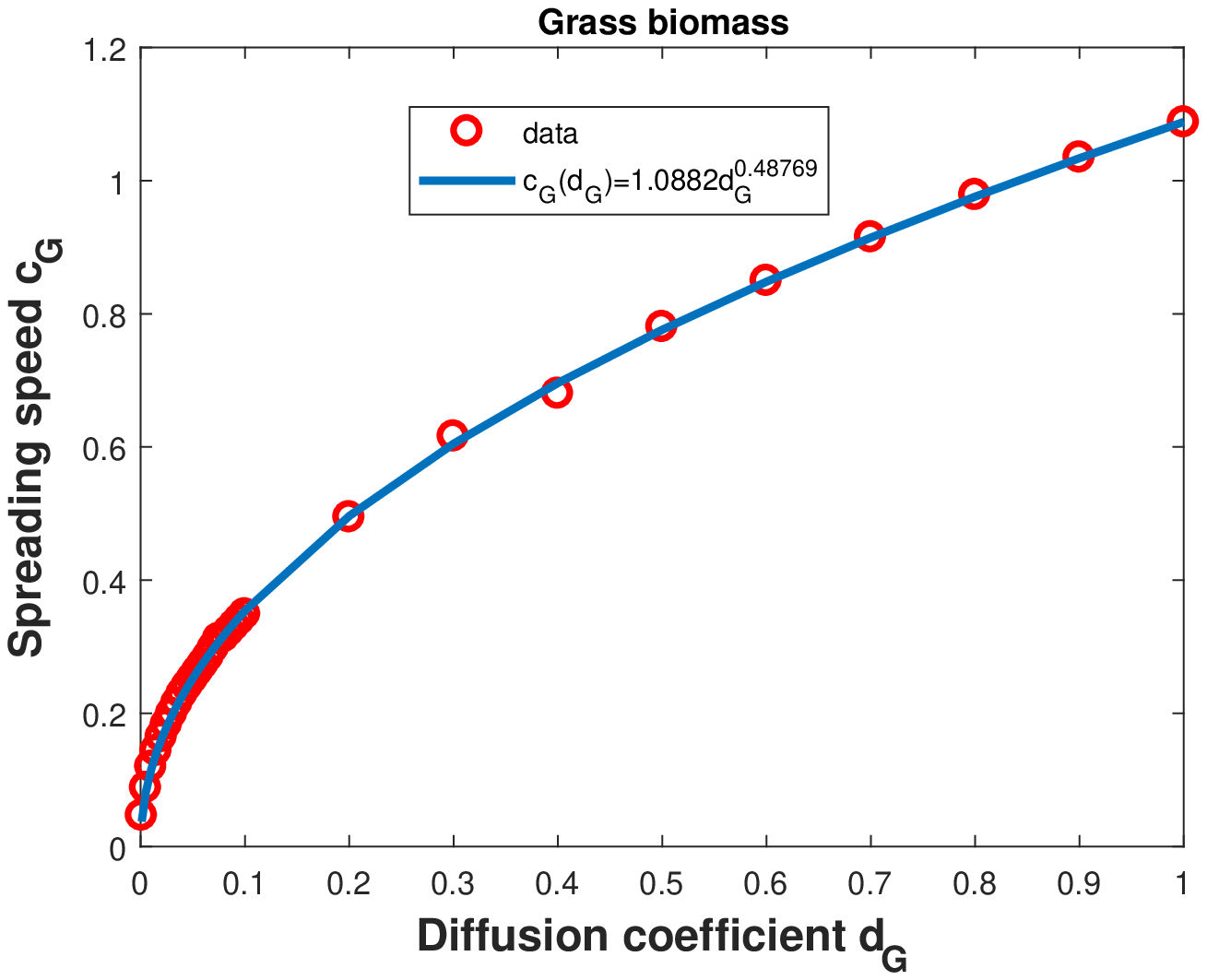}}
         \caption{The forest homogeneous steady state $\mathbf E_T=(24.3905,0)$ is unstable while the
        grassland homogeneous steady state $\mathbf E_G=(0, 2.1096)$ is
        stable. We illustrate the grassland encroachment phenomenon (see panel (a)) and spreading speeds fitting for
        grass biomass (see panel (b)). In panel (a), $\textbf{W}=450$ mm.yr$^{-1}$,
$\tilde{\tau}=2$ yr, $d_{T}$=0.001,
$d_G$=0.002. Remaining parameters are in Table \ref{table-param-2}, page \pageref{table-param-2}.}\label{spreading2bis}
 \end{figure}

\section{Conclusion}\label{Conclusion}
In this paper, we used the vector-valued recursion equation theory
(e.g. Weinberger et al.  \cite{Weinberger2002}, Lewis et al.
 \cite{Lewis2002}, Li et al.  \cite{Li2005}) to propose
a framework that deals with the existence of traveling waves for
monotone systems of impulsive reaction-diffusion equations and with the computation of
spreading speeds. This
study extends a previous one that dealt with the impulsive Fisher-Kolmogorov-Petrowsky-Piscounov (FKPP) equation (Yatat and
Dumont \cite{YatatDumont2018}). Specifically, our results handle the aforementioned issues only in the case of monostable situations; that is, when only one of the equilibria is stable. However, the bistable case (i.e. when two equilibria are simultaneously stable) is also meaningful and needs to be studied for monotone systems of impulsive reaction-diffusion equations. Travelling waves in bistable reaction-diffusion systems without impulsive perturbations are treated in Volpert  \cite{Volpert2014} (see also Yatat et al.  \cite{Yatat2017} for application in the context of bistable tree-grass reaction-diffusion model).

\par

The computation of
spreading speeds and the existence of traveling waves for
bistable monotone systems of impulsive reaction-diffusion equations will be the aim of future studies. It first requires to  elaborate a recursion equations theory that includes bistable cases and thus extending the results of Li et al.  \cite{Li2005} that only deal with monostable cases.

\bibliographystyle{plain}
\bibliography{bibliography}

\appendix

\section{Normalization procedure and model's parameter values}\label{Appendix-scaling}
We first assume that there are no fire events. In this setting,
equations (\ref{I-savnna})-(\ref{pulsed_swv_eq2}) become

\begin{equation}\label{savanna-RD}
\left\{
\begin{array}{l}
\left.
\begin{array}{l}
\displaystyle\frac{\partial T}{\partial t}=d_T(\textbf{W})\displaystyle \frac{\partial^2 T}{\partial x^2}+\gamma_{T}(\textbf{W})\left(1-\displaystyle\frac{T}{K_{T}(\textbf{W})}\right)T-\delta_{T}T,\\
\\
\displaystyle \frac{\partial G}{\partial t}=d_G(\textbf{W})\displaystyle \frac{\partial^2 G}{\partial x^2}+\gamma_{G}(\textbf{W})\left(1-\displaystyle\frac{G}{K_{G}(\textbf{W})}\right)G-\delta_{G}G-\eta_{TG}TG,\\
\end{array}
\right.  0\leq t\leq\tilde{\tau},\quad x\in\R,\\
\end{array}
\right.
\end{equation}
where  $\tilde{\tau}$ denotes
the fire period. Following Yatat et al. \cite{Yatat2018}, we have

\begin{itemize}
    \item $\gamma_{G}(\textbf{W})=\displaystyle\frac{\gamma_{G}\times\textbf{W}}{b_{G}+\textbf{W}}$ and
$\gamma_{T}(\textbf{W})=\displaystyle\frac{\gamma_{T}\times\textbf{W}}{b_{T}+\textbf{W}}$
where $\gamma_{G}$  and $\gamma_{T}$ (in yr$^{-1}$) express maximal
growth  of grass and tree biomasses, respectively,  while half saturations
$b_{G}$ and $b_{T}$ (in mm.yr$^{-1}$) determine how quickly they increase with water availability.
    \item $K_{T}(\textbf{W})=\displaystyle\frac{c_T}{1+d_{T}e^{-a_{T}\textbf{W}}}$, where
$c_T$ (in t.ha$^{-1}$) stands for maximum value of the tree biomass
carrying capacity, $a_{T}$ (mm$^{-1}$yr) controls the steepness of
the curve, and $d_{T}$ controls the location of the inflection
point. Similarly,
$K_{G}(\textbf{W})=\displaystyle\frac{c_G}{1+d_{G}e^{-a_{G}\textbf{W}}}$,
where $c_G$ (in t.ha$^{-1}$) denotes the maximum value of the grass
biomass carrying capacity,  $a_{G}$ (mm$^{-1}$yr) controls the
steepness of the curve, and $d_{G}$ controls the location of
the inflection point.
    \item The function $w_G$ is defined by
    \begin{equation}\label{def-omega}
    w_G(G)=\displaystyle\frac{G^{2}}{G^{2}+\alpha_G^{2}},
     \end{equation}
     where $G$, in tons per hectare (t.ha$^{-1}$), is the grass
     biomass and $\alpha_G$ is the value taken by $G$, when the fire intensity is half of its
maximum.
    \item the function $\psi$ is defined by
    \begin{equation}\label{def-psi}
    \psi(T)=\lambda_{fT}^{min} + (\lambda_{fT}^{max}-\lambda_{fT}^{min})e^{-p_TT},
     \end{equation}
where $T$, in tons per hectare (t.ha$^{-1}$), stands for the  tree
biomass, $\lambda_{fT}^{min}$ (in yr$^{-1}$) is the minimal  loss of
tree biomass due to fire in systems with a very large tree biomass,
$\lambda_{fT}^{max}$ (in yr$^{-1}$) is the maximal  loss of
tree/shrub biomass due to fire in open vegetation (e.g. for an
isolated woody individual having its crown within the flame zone),
$p_T$ (in t$^{-1}$.ha) is proportional to the inverse of biomass
suffering an intermediate level of mortality.

\end{itemize}

Assuming that requirement (\ref{tecknical-Assumption}) is satisfied
or, equivalently,
$R_T=\displaystyle\frac{\gamma_{T}(\textbf{W})}{\delta_T}>1$ and
$R_G=\displaystyle\frac{\gamma_{G}(\textbf{W})}{\delta_G}>1$. We set
\begin{equation}\label{scaling}
\left\{
\begin{array}{l}
K_T'=  K_{T}(\textbf{W})(1-1/R_T),\quad K_G'=  K_{G}(\textbf{W})(1-1/R_G),\quad U=T/K_T',\quad V=G/K_G', \\
r=\delta_T(R_T-1)t,\quad \tau=\delta_T(R_T-1)\tilde{\tau},\quad z=x\sqrt{\delta_T(R_T-1)}, \\
\lambda=\displaystyle\frac{\delta_G(R_G-1)}{\delta_T(R_T-1)},\quad
\gamma=\displaystyle\frac{\eta_{TG}K_T'}{\delta_G(R_G-1)}.
\end{array}
\right.
\end{equation}

Hence, with straightforward computations, system (\ref{savanna-RD})
becomes

\begin{equation}\label{savanna-RD-scale}
\left\{
\begin{array}{l}
\left.
\begin{array}{l}
\displaystyle\frac{\partial U}{\partial r}=d_T(\textbf{W})\displaystyle \frac{\partial^2 U}{\partial z^2}+\left(1-U\right)U,\\
\\
\displaystyle \frac{\partial V}{\partial r}=d_G(\textbf{W})\displaystyle \frac{\partial^2 V}{\partial z^2}+\lambda \left(1-V-\gamma U\right)V.\\
\end{array}
\right.  0\leq r\leq\tau, z\in\R,\\
\end{array}
\right.
\end{equation}

Now, letting
\begin{equation}
\begin{array}{c}
  t:=r, \quad x:=z,\quad d_T:=d_u,\quad d_G:=d_v,\quad p:=p_TK_T', \\
 \quad  a_{min}:=\lambda_{fT}^{min},\quad
  a_{max}:=\lambda_{fT}^{max},\quad \alpha=\displaystyle\frac{\alpha_G}{K_G'}
\end{array}
\end{equation}
in (\ref{def-omega}), (\ref{def-psi}) and (\ref{savanna-RD-scale}), we recover system
(\ref{cooperation0})-(\ref{mise-a-jour0}).
Furthermore,  scaling (\ref{scaling}) redefines the parameters as follows:
\begin{itemize}
    \item The threshold $\mathcal{R}_0=(1-\eta)\exp(\lambda\tau)$ becomes
    \begin{equation}
    \begin{array}{ccl}
     \tilde{\mathcal{R}}_0 & =& (1-\eta)\exp\left( \displaystyle\frac{\gamma_G(\textbf{W})-\delta_G}{\gamma_T(\textbf{W})-\delta_T}\times (\gamma_T(\textbf{W})-\delta_T)\tilde{\tau}\right),  \\
     & = &  (1-\eta)\exp((\gamma_G(\textbf{W})-\delta_G)\tilde{\tau}).
    \end{array}
\end{equation}
   \item The threshold $\mathcal{R}_1=(1-\eta)\exp(\lambda(1-\gamma)\tau)$ becomes
   \begin{equation}
    \begin{array}{ccl}
     \tilde{\mathcal{R}}_1 & =& (1-\eta)\exp\left((\gamma_G(\textbf{W})-\delta_G)\tilde{\tau}\left(1- \displaystyle\frac{\eta_{TG}K_T'}{\gamma_G(\textbf{W})-\delta_G}\right)\right),  \\
     & = &  (1-\eta)\exp((\gamma_G(\textbf{W})-\delta_G-\eta_{TG}K_T')\tilde{\tau}).
    \end{array}
\end{equation}

\item From $\bar{v}=\displaystyle\frac{\eta}{(1-\eta)(\exp(\lambda\tau)-1)}$ one deduces that
$\bar{G}=\left(1-\displaystyle\frac{\eta}{1-\eta}\times\displaystyle\frac{1}{(\exp((\gamma_G(\textbf{W})-\delta_G)\tilde{\tau})-1)}\right)K_G'$. Let us set $w_{\bar{G}}=\displaystyle\frac{(\bar{G})^2}{(\bar{G})^2+\alpha_G^2}$ and $w_{0}=\displaystyle\frac{(1-\bar{v})^2}{(1-\bar{v})^2+\alpha^2}$. Then, the threshold
$\mathcal{R}_2=(1-a_{max}w_0)\exp(\lambda\tau)$ becomes
\begin{equation}
    \begin{array}{ccl}
     \tilde{\mathcal{R}}_2 &=& (1-\lambda_{fT}^{max}w_{\bar{G}})\exp((\gamma_T(\textbf{W})-\delta_T)\tilde{\tau}).
    \end{array}
\end{equation}
\end{itemize}
Recall that $\mathcal{R}_0$, $\mathcal{R}_1$, $\bar{v}$ and $\mathcal{R}_2$ are related to the normalized system
(\ref{cooperation})-(\ref{mise-a-jour}) while $\tilde{\mathcal{R}}_0$, $\tilde{\mathcal{R}}_1$, $\bar{G}$ and $\tilde{\mathcal{R}}_2$ are related to the original system (\ref{I-savnna})-(\ref{pulsed_swv_eq2}).

In Tables \ref{table-param-1} and \ref{table-param-2}, we summarize the
parameter values that are used for numerical simulations. They are chosen according to \cite{Yatat2017,Yatat2018,PhDYatatDjeumen2018}.

\begin{table}[H]
\centering
\begin{tabular}{cccccc}
    \hline
    $c_{G}$,  t.ha$^{-1}$   & $c_{T}$,  t.ha$^{-1}$  & $b_{G}$, mm.yr$^{-1}$ & $b_{T}$, mm.yr$^{-1}$  &  $a_{G}$, yr$^{-1}$ & $a_{T}$, yr$^{-1}$ \\
    \hline
    $20$ & $450$  & $501$ & $1192$ & $0.0029$ & $0.0045$ \\
    \hline
    $d_{G}$, $-$    & $d_{T}$, $-$ & $\gamma_{G}$, yr$^{-1}$ & $\gamma_{T}$, yr$^{-1}$ & $\delta_{G}$, yr$^{-1}$ & $\delta_{T}$, yr$^{-1}$\\
    \hline
    $14.73$     & $106.7$ & $1.5$ & $2$ & $0.3$ & $0.1$\\
    \hline
    $\eta$, $-$ & $\lambda_{fT}^{min}$, $-$ & $\lambda_{fT}^{max}$, $-$ & $p_T$, t$^{-1}$ha & $\alpha_G$, t.ha$^{-1}$ &$\eta_{TG}$, ha.t$^{-1}$yr$^{-1}$\\
    \hline
    $\textbf{0.7}$ & $0.05$ & $0.6$ & $0.01$ & $\textbf{2}$ & $0.01$  \\
    \hline
\end{tabular}
\caption{Parameter values related to system
(\ref{I-savnna})-(\ref{pulsed_swv_eq2}) at $\textbf{W}=1200$ mm.yr$^{-1}$ and
$\tilde{\tau}=2$ yr. In this setting, the forest equilibrium $\mathbf E_T=(K_T',0)$ is stable
while the grassland equilibrium $\mathbf E_G=(0, \bar{G})$ is
unstable.}\label{table-param-1}
\end{table}

\begin{table}[H]
\centering
\begin{tabular}{cccccc}
    \hline
    $c_{G}$,  t.ha$^{-1}$   & $c_{T}$,  t.ha$^{-1}$  & $b_{G}$, mm.yr$^{-1}$ & $b_{T}$, mm.yr$^{-1}$  &  $a_{G}$, yr$^{-1}$ & $a_{T}$, yr$^{-1}$ \\
    \hline
    $20$ & $450$  & $501$ & $1192$ & $0.0029$ & $0.0045$ \\
    \hline
    $d_{G}$, $-$    & $d_{T}$, $-$ & $\gamma_{G}$, yr$^{-1}$ & $\gamma_{T}$, yr$^{-1}$ & $\delta_{G}$, yr$^{-1}$ & $\delta_{T}$, yr$^{-1}$\\
    \hline
    $14.73$     & $106.7$ & $1.5$ & $2$ & $0.3$ & $0.1$\\
    \hline
    $\eta$, $-$ & $\lambda_{fT}^{min}$, $-$ & $\lambda_{fT}^{max}$, $-$ & $p_T$, t$^{-1}$ha & $\alpha_G$, t.ha$^{-1}$ &$\eta_{TG}$, ha.t$^{-1}$yr$^{-1}$\\
    \hline
    $\textbf{0.1}$ & $0.05$ & $0.6$ & $0.01$ & $\textbf{0.2}$ & $0.01$  \\
    \hline
\end{tabular}
\caption{Parameter values related to system
(\ref{I-savnna})-(\ref{pulsed_swv_eq2}) at $\textbf{W}=450$ mm.yr$^{-1}$ and
$\tilde{\tau}=2$ yr. In this setting, the forest equilibrium $\mathbf E_T=(K_T',0)$ is
unstable while the grassland equilibrium $\mathbf E_G=(0, \bar{G})$ is
stable.}\label{table-param-2}
\end{table}

\section{Another monotone increasing impulsive system}\label{appendix-scaling-grassland}
If, instead of the first coordinates change (\ref{new-variables1}), one
considers
\begin{equation}
\left\{
    \begin{array}{ccl}
      u_n & = & 1-U_n, \\
      v_n & = & V_n,
    \end{array}
\right.
\end{equation}
then the normalized systems
(\ref{cooperation0})-(\ref{mise-a-jour0})  becomes
\begin{equation}\label{cooperation4}
 \left\{%
\begin{array}{lcl}
 \displaystyle\frac{\partial u_n}{\partial t} &=& -u_n(1-u_n)+d_u\displaystyle\frac{\partial^2 u_n}{\partial x^2}, \quad 0\leq t\leq \tau,\quad x\in\mathbb{R},\\
    \displaystyle\frac{\partial v_n}{\partial t} &=&
   \lambda v_n(1-v_n) - \lambda\gamma v_n(1-u_n)+d_v\displaystyle\frac{\partial^2 v_n}{\partial
  x^2},
\end{array}
\right.
\end{equation}
together with the updating conditions
 \begin{equation}\label{mise-a-jour4}
 \left\{%
\begin{array}{lcl}
u_{n+1}(x,0) &=& w_v(v_{n}(x,\tau))\psi(1-u_n(x,\tau))(1-u_n(x,\tau))+u_n(x,\tau),\\
    v_{n+1}(x,0) &=& (1- \eta)v_{n}(x,\tau).
\end{array}
\right.
\end{equation}
As we mentioned in Subsection \ref{ss}, this is a monotone increasing system and hence, reasoning as before, one
can study the case where the stability of the equilibria is reversed.
\end{document}